\newtheorem{theorem}{Theorem}[section]
\newtheorem{lemma}[theorem]{Lemma}
\newtheorem{proposition}[theorem]{Proposition}
\newtheorem{corollary}[theorem]{Corollary}
\newtheorem{conjecture}[theorem]{Conjecture}
\numberwithin{equation}{section}
\theoremstyle{plain}
\newtheorem{example}[theorem]{Example}
\newtheorem{remark}[theorem]{Remark}
\newcommand{\qed}{\hfill \mbox{$\Box$}\medskip\newline}
\newenvironment{proof}{\noindent {\bf Proof:}}{\qed \par}
\newenvironment{proofweights}{\noindent {\bf Proof of Theorem \ref{weights}:}}{\qed \par}
\newenvironment{proofmain}{\noindent {\bf Proof of Theorem \ref{main}:}}{\qed \par}
\newenvironment{prooflss}{\noindent {\bf Proof of Proposition \ref{leaf semisimple}:}}{\qed \par}
\newcommand{\sgn}{\sigma}
\newcommand{\triv}{\operatorname{triv}}
\newcommand{\mg}{\mathfrak{g}}
\newcommand{\mh}{\mathfrak{h}}
\newcommand{\Irrep}{\operatorname{Irrep}}
\newcommand{\Spec}{\operatorname{Spec}}
\newcommand{\codim}{\operatorname{codim}}
\renewcommand{\dim}{\operatorname{dim}}
\newcommand{\End}{\operatorname{End}}
\newcommand{\Eu}{\operatorname{Eu}}
\newcommand{\rk}{\operatorname{rk}}
\newcommand{\crk}{\operatorname{crk}}
\newcommand{\Hom}{\operatorname{Hom}}
\newcommand{\Ext}{\operatorname{Ext}}
\newcommand{\Id}{\operatorname{Id}}
\newcommand{\bZ}{\mathbf{Z}}
\newcommand{\bC}{\mathbf{C}}
\newcommand{\cs}{\bC^\times}
\newcommand{\bD}{\mathbf{D}}
\newcommand{\la}{\lambda}
\newcommand{\into}{\hookrightarrow}
\newcommand{\surj}{\twoheadrightarrow}
\newcommand{\reg}{\operatorname{reg}}
\renewcommand{\H}{\operatorname{H}}
\newcommand{\bH}{\operatorname{\mathbf{H}}}
\newcommand{\IH}{\operatorname{IH}}
\newcommand{\IC}{\operatorname{IC}}
\newcommand{\hpz}{\operatorname{HP_0}}
\newcommand{\hp}{\operatorname{HP}}
\newcommand{\cA}{\mathcal{A}}
\newcommand{\cO}{\mathcal{O}}
\newcommand{\caD}{\mathcal{D}}
\newcommand{\tX}{\tilde X}
\newcommand{\rs}{\rho_*\Omega_{\tilde X}}
\newcommand{\fX}{\mathfrak{X}}
\newcommand{\tfX}{\tilde\fX}
\newcommand{\fT}{\mathfrak{T}}
\newcommand{\fM}{\mathfrak{M}}
\newcommand{\XA}{X(\cA)}
\newcommand{\tXA}{\tX(\tcA)}
\newcommand{\tcA}{\tilde{\cA}}
\renewcommand{\reg}{{\mathrm{reg}}}
\newcommand{\becircled}{\mathaccent "7017}
\newcommand{\C}{\bC}
\newcommand{\cB}{\mathcal{B}}
\newcommand{\excise}[1]{}
\renewcommand{\and}{\qquad\text{and}\qquad}
\newcommand{\nicktodo}{\todo[inline,color=green!20]}
\begin{document}
\spacing{1.2}

\noindent {\Large \bf 
Poisson-de Rham homology of hypertoric varieties\\ and nilpotent cones}\\

\noindent
{\bf Nicholas Proudfoot}\footnote{Supported by NSF grant DMS-0950383.}\\
Department of Mathematics, University of Oregon,
Eugene, OR 97403\\

\noindent
{\bf Travis Schedler}\\
Department of Mathematics, University of Texas,
Austin, TX 78712\\


{\small
\begin{quote}
\noindent {\em Abstract.}
We prove a conjecture of Etingof and the second author for hypertoric
varieties, that the Poisson-de Rham homology of a unimodular
hypertoric cone is isomorphic to the de Rham cohomology of its
hypertoric resolution.  More generally, we prove that this conjecture
holds for an arbitrary conical variety admitting a symplectic
resolution
if and only if it holds in degree zero for all normal slices to
symplectic leaves.

The Poisson-de Rham homology of a Poisson cone inherits a second
grading.
In the hypertoric case, we compute the resulting 2-variable Poisson-de
Rham-Poincar\'e polynomial, and prove that it is equal to a
specialization of an enrichment of the Tutte polynomial of a matroid
that was introduced by Denham \cite{Denham}.
We also compute this polynomial for S3-varieties of type A in terms
of Kostka polynomials, modulo a previous conjecture of the first
author, and we give a conjectural answer for nilpotent cones in
arbitrary type, which we prove in rank less than or equal to 2.
\end{quote}
}

\section{Introduction}
Let $X$ be a Poisson variety over $\C$.  Etingof and the second author
\cite{ESL} define a right D-module $M(X)$, and define the {\bf
  Poisson-de Rham homology group} $\hp_k(X)$ to be the cohomology in
degree $-k$ of the derived pushforward of $M(X)$ to a point.  If $X$
is affine, then $\hp_0(X)$ 
 coincides with the zeroth Poisson homology
of $\C[X]$, but $\hp_*(X)$ does not directly relate to higher Poisson
homology.  If $X$ is smooth and symplectic, then $M(X)$ is naturally
isomorphic to the right D-module $\Omega_X$ of volume forms on $X$,
and therefore we have an isomorphism $\hp_k(X) \cong \H^{\dim X -
  k}(X; \C)$.
The next natural case to consider is when $X$ is singular but admits a
conical symplectic resolution\footnote{A precise definition of a
  conical symplectic resolution is given at the beginning of Section
  \ref{sec:rigidity}.}  $\rho:\tX\to X$; examples include hypertoric
varieties, symmetric schemes of Kleinian singularities (more
generally, Nakajima quiver varieties), nilpotent cones (more
generally, S3-varieties), and certain slices to Schubert varieties in
the affine Grassmannian \cite[\S 2]{BLPWquant}.  In this case, Etingof
and the second author \cite[1.3.1]{ES11} conjecture that $M(X)$ is
(noncanonically) isomorphic to $\rs$, and therefore that $\hp_k(X)
\cong \H^{\dim X - k}(\tX; \C)$.

In this paper, we prove this conjecture for hypertoric varieties.
More generally, we show that if the vector space isomorphism holds
when $k=0$ not just for $X$, but also for all normal slices to
symplectic leaves of $X$, then the D-module isomorphism $M(X)\cong
\rs$ holds, as well (Theorem \ref{main}).  These vector space
isomorphisms have already been established for hypertoric varieties by
the first author \cite[3.2]{Pr12}, therefore Theorem \ref{main}
applies.

Part of the structure of a conical symplectic resolution is an action
of $\cs$ on $X$ with respect to which the Poisson bracket is
homogeneous.  The right D-module $M(X)$ is weakly $\cs$-equivariant,
and this induces a second grading on $\hp_*(X)$, which we call the
{\bf weight grading}.  We prove a general result (Theorem
\ref{weights} and its corollaries) that computes $M(X)$, with its
weight grading, in terms of the degree zero Poisson homology of the
slices.  Let $P_X(x,y)$ be the Poincar\'e polynomial of $\hp_*(X)$,
where $x$ encodes homological degree and $y$ encodes weight.  When $X$
is a hypertoric variety, we show that $P_X(x,y)$ is equal to a
specialization of a polynomial studied by Denham \cite{Denham} that
encodes the dimensions of the eigenspaces of the combinatorial
Laplacian of a matroid (Theorem \ref{laplacian}), which is closely
related to the Tutte polynomial of the associated hyperplane
arrangement. 
When $X$ is an S3-variety of type A, we similarly compute $P_X(x,y)$
in terms of Kostka polynomials (Proposition \ref{s3-kostka}), modulo a
conjecture that appears in \cite[3.4]{Pr12}.  Finally, we give a
conjectural description of $P_X(x,y)$ where $X$ is the nilpotent cone
in arbitrary type (Conjecture \ref{arbitrary type}), and prove it in
certain cases.

\vspace{\baselineskip}
\noindent
{\bf Acknowledgments:}
The first author would like to thank G.~Denham, C.~Mautner, and
V.~Ostrik for their help with this project, and in particular we thank
Mautner for help with Lemmas \ref{perversity} and \ref{l:ext-ss'}.
The second author would like to thank P.~Etingof for useful
discussions, particularly about Proposition \ref{semisimp} and
Conjecture \ref{arbitrary type}, and for helpful comments on an
earlier version. We are grateful to G.~Lusztig for suggesting the
formula of Conjecture \ref{arbitrary type}, and for introducing us to
the material in Remark \ref{BL}.

\section{The twistor family}
Let $X$ be a normal, irreducible, affine Poisson variety of finite
type over $\bC$.  Let $\rho:\tX\to X$ be a
projective symplectic resolution, equipped with a particular choice of
ample line bundle on $\tX$.  Kaledin extends $\rho$ to a projective
map $\rho:\tfX\to\fX$ of schemes over the formal disk $\Delta := \Spec
\bC[[t]]$, where over the closed point $0\in\Delta$ we have
$$\tfX_0\cong \tX\and\fX\cong X.$$
Furthermore, he shows that $\fX$ is normal and flat over $\Delta$, and
that over the generic point, $\rho$ restricts to an isomorphism of
smooth, affine, symplectic varieties \cite[2.2 and 2.5]{KalDEQ}.  This
family of maps over $\Delta$ is called the {\bf twistor family}.

Let $M := M(X)$, and let $T := \rs$ be the derived pushforward to $X$ of the
right D-module of volume forms on $\tX$.  By \cite[2.11]{KalPPV},
$\rho$ is semismall, hence $T$ is also a right D-module (that is, the
homology of $T$ is concentrated in degree zero).  These extend
naturally to right D-modules $\fM := M(\fX)$ and $\fT :=
\rho_*\Omega_{\tfX}$ on $\fX$.  Let $\fM_0$ be the right D-module on
$\fX$ obtained by killing $\bC[[t]]$-torsion in $\fM$, and let $M_0$
be the restriction of $\fM_0$ to $X$. (In Theorem \ref{main}, we will
show that, under suitable hypotheses, $M_0$ is isomorphic to $M$.
However, we {\em a priori} know only that $M_0$ is a quotient of $M$.)

We would like to perform the same construction on $\fT$ and $T$, but
it is unnecessary: if we forget the Poisson structure, the family
$\tfX$ over $\Delta$ is locally trivial (that is, $\tfX$ admits an open
cover by trivial families over $\Delta$) \cite[17]{Namiflop},
thus $\fT$ has no $\bC[[t]]$-torsion.  Since $M(X)$ equals the
canonical D-module of volume forms when $X$ is smooth and symplectic
\cite[2.6]{ESL},
the right D-modules $\fM_0$, $\fM$, and $\fT$ are all isomorphic at
the generic fiber.

\excise{
\begin{lemma}
$\fT$ has no $\bC[[t]]$-torsion, thus $T_0 \cong T$.
\end{lemma}

\begin{proof}
  Let $N$ be the torsion of $\fT$.  Let $S$ be a maximal leaf such
  that the restriction of $N$ to $S$ is nonzero.  Let $s \in S$ be a
  point, and restrict to the formal neighborhood of $s$. Then $X$
  becomes a product $\Delta^{\dim S} \times X'$, and $\fX$ becomes a
  product $\Delta^{\dim S} \times \tX'$; moreover, the family becomes
  a product $\rho: \tfX = \Delta^{\dim S} \times \tfX' \to
  \Delta^{\dim S} \times \tX'$, which is a product $\rho = \Id \times
  \rho'$.  We can therefore reduce the problem to $\rho': \tfX' \to
  \tX'$. In other words, up to replacing the family $\tfX \to \tX$ by
  a family of formal projective symplectic resolutions of formal
  Poisson schemes, we can assume that the torsion $N$ is supported at
  a single point $s \in \tX$.

  Let us assume this.  Then, $\rho_* \Omega_{\tfX}/N$ is torsion free.
  Now, let $\pi: \tfX \to \Delta$ be the map to the $t$
  parameter.  Then, $\pi_\bullet (\rho_* \Omega_{\tfX}/N)$ is
  torsion-free, and hence is a complex of vector bundles. Therefore
  its Euler-Poincare characteristic, $\chi(\pi_\bullet(\rho_*
  \Omega_{\tfX}/N))$, is constant.  Evaluating at the general point of
  $\Delta$, we obtain the Euler-Poincar\'e characteristic $\chi(\tilde
  X)$, by topological triviality of the family $\tfX$.  Evaluating at
  the special point, we obtain $\chi(\tilde X) - \chi(\pi_\bullet N)$.
  Therefore $\chi(\pi_\bullet N) = 0$.  But, $N$ is a direct sum of
  delta-function D-modules, and hence $N=0$.
\end{proof}
}

\begin{proposition}\label{semisimp}
The semisimplification of $M_0$ is (noncanonically) isomorphic to $T$.
\end{proposition}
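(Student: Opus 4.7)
The approach I would take relies on the Decomposition Theorem together with a direct comparison of $\fM_0$ and $\fT$ as torsion-free families over $\Delta$. The starting point is that $T$ is semisimple: since $\rho: \tX \to X$ is a projective semismall morphism from a smooth variety, the Decomposition Theorem (in the semismall form of Borho and MacPherson) expresses $T = \rho_*\Omega_{\tX}$ as a finite direct sum of simple holonomic D-modules on $X$. It therefore suffices to show that $M_0$ and $T$ have the same class in the Grothendieck group $K_0$ of holonomic D-modules on $X$, for then they have the same composition factors with multiplicities, and the semisimple $T$ must agree with the semisimplification of $M_0$.

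To compare the classes, I would exploit that $\fM_0$ and $\fT$ are coherent $\bC[[t]]$-torsion-free D-modules on $\fX$ with canonically identified generic fibers over $\Delta$. Fix any such generic isomorphism; after clearing denominators by multiplying by a suitable power of $t$, one obtains a D-module morphism $\iota: \fM_0 \to \fT$ which is an isomorphism over the generic fiber. Since both source and target are $\bC[[t]]$-torsion-free, $\iota$ is injective, and by coherence its cokernel $\cC := \fT / \iota(\fM_0)$ is annihilated by a finite power of $t$. Thus $\cC$ is supported on $X \subset \fX$ and corresponds via Kashiwara's equivalence to a holonomic D-module on $X$.

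Applying $-\otimes^L_{\bC[[t]]}\bC$ to $0 \to \fM_0 \to \fT \to \cC \to 0$ and using $\bC[[t]]$-flatness of the first two terms produces the four-term exact sequence of holonomic D-modules on $X$
\[ 0 \to \cC[t] \to M_0 \to T \to \cC/t\cC \to 0, \]
where $\cC[t]$ denotes the $t$-torsion subobject. The tautological exact sequence $0 \to \cC[t] \to \cC \xrightarrow{t} \cC \to \cC/t\cC \to 0$ yields $[\cC[t]] = [\cC/t\cC]$ in $K_0$, hence $[M_0] = [T]$, which completes the argument.

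The main obstacle I anticipate is of a foundational nature in the second step: one must verify that $\cC$ and its subquotients $\cC[t]$ and $\cC/t\cC$ really are holonomic D-modules on $X$ in a category where $K_0$ admits a Jordan--H\"older decomposition, and that the restriction-to-$t=0$ functor applied to the short exact sequence actually produces the claimed four-term sequence. These issues reduce to checking that $\fM_0$, $\fT$, and their iterated constructions all live in a suitable category of coherent D-modules on $\fX$ for which the desired $K_0$ calculations are valid.
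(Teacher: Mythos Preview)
Your proposal is correct and follows essentially the same approach as the paper. The paper's proof is two sentences: it asserts without argument that $\fM_0$ and $\fT$ being isomorphic at the generic fiber forces $M_0$ and $T$ to have the same class in the Grothendieck group of holonomic D-modules on $X$, and then invokes the decomposition theorem to conclude that $T$ is semisimple. Your proof supplies the standard ``constancy of $K_0$ in a torsion-free family over a DVR'' argument that the paper takes for granted, via the four-term sequence and the equality $[\cC[t]] = [\cC/t\cC]$; the foundational worries you flag are exactly the ones the paper elides.
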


\begin{proof}
  Since $\fM_0$ and $\fT$ are isomorphic at the generic fiber, the
  semisimplifications of $M_0$ and $T$ must be isomorphic (as they
  have the same class in the Grothendieck group of holonomic
  D-modules on $X$).  But $T$ is semisimple by the decomposition
  theorem \cite[6.2.5]{BBD},
  so it must be isomorphic to the semisimplification of $M_0$.
\end{proof}

\section{Rigidity}\label{sec:rigidity}
We now add the hypothesis that $\rho:\tX\to X$ is {\bf
  conical}, which means the following:
\begin{itemize}
\item $X$ and $\tX$ are both equipped with actions of the
  multiplicative group $\cs$, and the map $\rho$ is equivariant.
\item The action of $\cs$ induces a non-negative grading on
  $\mathbb{C}[X]$, with only the constant functions in degree zero.
\item The Poisson bracket on $X$ (equivalently the symplectic form on
  $\tX$) is homogeneous for the action of $\cs$.
\end{itemize}
Our aim in this section is to prove that $\Ext^1(T,T) = 0$, and
therefore that $M_0$ is in fact isomorphic to $T$.  We accomplish this
in two steps, first showing that all summands supported on a single
leaf have no self-extensions, and then showing that there can be no
extensions between summands of $T$ supported on different leaves.

For the first step, we prove more generally that all topological local
systems on a leaf are semisimple.  We use the term {\bf local system} to
mean an $\mathcal{O}$-coherent right $\caD$-module (equivalently, a vector bundle with a flat connection) 
on a locally closed smooth subvariety.  
We use the term {\bf topological local system} to mean a representation of
the fundamental group of such a subvariety. By the Riemann-Hilbert
correspondence, the latter are equivalent to the former when we
require that the connection has regular singularities.  All of the
local systems we consider will have regular singularities.

\begin{proposition}\label{leaf semisimple}
  All finite-rank topological local systems on a leaf
  $S\subset X$ are semisimple.
\end{proposition}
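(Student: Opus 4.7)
The plan is to exploit the conical $\cs$-action on $X$.  Since the Poisson bracket is $\cs$-homogeneous, the action permutes the finitely many symplectic leaves, and connectedness of $\cs$ ensures each individual leaf $S$ is preserved, giving a restricted action $\phi \colon \cs \times S \to S$.  Path-connectedness of $\cs$ then forces every self-map $\phi_t \colon S \to S$ to be isotopic to $\phi_1 = \Id_S$ (the isotopy being provided by a path in $\cs$ from $1$ to $t$), so $\phi_t^* L \cong L$ for every finite-rank local system $L$ on $S$; in particular, every such $L$ is weakly $\cs$-equivariant.

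\noindent Semisimplicity of the category of finite-rank local systems on $S$ is equivalent to the vanishing of $\Ext^1(L'',L')$ for every pair of simple local systems $L',L''$.  For such a pair, weak equivariant structures on $L'$ and $L''$ induce a $\cs$-action on $\Ext^1(L'',L')$, and the class of any extension $0 \to L' \to L \to L'' \to 0$ is $\cs$-fixed because the middle term $L$ itself satisfies $\phi_t^* L \cong L$.  It therefore suffices to show that the $\cs$-weight grading on $\Ext^1(L'',L')$ has no zero-weight part, i.e.\ $\Ext^1(L'',L')^{\cs} = 0$.

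\noindent The main obstacle is establishing this weight restriction.  My approach would be to work in the formal neighborhood of a point $s \in S$, using Kaledin's formal slice decomposition to write $\fX$ near $s$ as a product $\widehat{S}_s \times X_s$, where $X_s$ is a transverse conical symplectic slice with cone point $s$.  The $\cs$-action is trivial (up to conjugation) along the $\widehat{S}_s$ factor and has the full conical grading along $X_s$.  Restricting $L'$ and $L''$ to the slice and computing $\Ext^1$ via the D-module machinery already in play in Section 2, the positive grading on $\bC[X_s]$ (constants only in degree zero) should propagate to give $\cs$-weights on $\Ext^1(L'',L')$ that are all strictly positive (or strictly negative).  Combined with the $\cs$-fixedness of any extension class, this forces $\Ext^1 = 0$ and hence the desired semisimplicity.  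Making the weight propagation precise---and ruling out spurious zero-weight contributions coming from flat directions along $S$---is where I expect the real difficulty to lie.
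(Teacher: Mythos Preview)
Your strategy has a fundamental gap at the weight step, and I do not see a way to repair it.  The category of finite-rank topological local systems on $S$ is equivalent to the category of finite-dimensional representations of $\pi_1(S)$, and $\Ext^1(L'',L')$ computed there is the group cohomology $H^1\!\big(\pi_1(S),\Hom(L'',L')\big)$.  You have already observed that each $\phi_t$ is isotopic to the identity; this means $\phi_t$ acts \emph{trivially} on $\pi_1(S)$, hence acts trivially (up to natural isomorphism) on the category of $\pi_1(S)$-representations, and therefore trivially on every $\Ext$ group between such representations.  In other words, once you unwind the ``weak equivariance'' you set up, the induced $\cs$-action on $\Ext^1(L'',L')$ has every class in weight zero, so the hoped-for positivity can never hold.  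The slice argument does not rescue this: $L'$ and $L''$ are local systems on $S$, and the transverse slice $X_S$ meets $S$ only at the point $s$, so their restrictions to $X_S$ are just vector spaces at a point, with $\Ext^1=0$; the extension data you care about is global on $S$ and invisible to the slice.  Put differently, the conical grading lives on the normal directions to $S$, while the obstruction to semisimplicity lives entirely along $S$, and your mechanism does not transfer one to the other.

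The paper's argument is quite different and relies on an external input you would not guess from the $\cs$-action alone.  One passes to the normalization $Y$ of $\bar S$, notes that $\pi_1(S)\cong\pi_1(Y_\reg)$, and then invokes a theorem of Namikawa that the profinite completion $\hat\pi_1(Y_\reg)$ is finite for a conical symplectic variety $Y$.  Combined with Grothendieck's theorem that $\pi_1\to\hat\pi_1$ induces an equivalence on finite-dimensional representations, this forces the representation category of $\pi_1(S)$ to be semisimple.  So the conical hypothesis is indeed essential, but it enters through Namikawa's finiteness theorem rather than through any weight argument on $\Ext^1$.
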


\begin{remark}
  Proposition \ref{leaf semisimple} does not require that $X$ admit a
  symplectic resolution, but only that it be conical and be a
  symplectic variety in the sense of Beauville \cite{Beau}, which
  means that the Poisson bracket on the regular locus of $X$ is
  nondegenerate and the inverse meromorphic symplectic form extends to
  a (possibly degenerate) 2-form on some (equivalently every)
  resolution of $X$.  Such varieties include, for example, quotients
  of symplectic varieties by finite groups acting symplectically
  \cite[2.4]{Beau}, which often do not admit symplectic resolutions
  (see, e.g., \cite{BS-nesr}).
\end{remark}

\begin{prooflss}
  Let $Y$ be the normalization of the closure of $S$ in $X$.  Then $Y$
  is a symplectic variety in the sense of Beauville
  \cite[2.5]{KalPPV}, and the conical action on $X$ induces a conical
  action on $Y$.  The regular locus $Y_\reg$ is birational to $S$ and
  isomorphic away from a subvariety of codimension 2; in particular,
  the fundamental groups of $S$ and $Y_\reg$ are isomorphic.  Thus it
  is sufficient to prove that every finite-rank topological local
  system on $Y_\reg$ is trivial.

  Since $Y_\reg$ is a quasiprojective variety, $\pi_1(Y_\reg)$ is
  finitely generated (this follows, for example, from the finite
  triangulability of \cite{Loj-tsas}).  In the situation at hand,
  Namikawa has proved that the profinite completion $\hat
  \pi_1(Y_\reg)$ is finite \cite{Nam-Fgss}.  By a theorem of
  Grothendieck \cite{Gro-rlcpgd}, the map $\pi_1(Y_\reg) \to \hat
  \pi_1(Y_\reg)$ induces an equivalence of categories of
  finite-dimensional representations, hence the category of
  finite-dimensional representations of $\pi_1(Y_\reg)$ (equivalently,
  the category of finite-rank topological local systems on $Y_\reg$)
  is semisimple.
\end{prooflss}

Let $S\subset X$ be a symplectic leaf, and let $i:\bar S\smallsetminus
S\to X$ be the inclusion of the boundary of $S$.  Let $K_S :=
H^{\codim S}\rho_*\Omega_{\rho^{-1}(S)}$, which is a local system on
$S$ with regular singularities.  Since the resolution $\rho$ is
semismall \cite[2.11]{KalPPV}, the
decomposition theorem \cite[6.2.5]{BBD} 
yields
\begin{equation}\label{e:t-dec}
T\;\cong\; \displaystyle\bigoplus_S \IC(S; K_S).
\end{equation}
By Proposition \ref{leaf semisimple}, we conclude that 
$\Ext^1(K_S, K_S) = 0$, and therefore that we have $\Ext^1(\IC(S;
K_S), \IC(S; K_S)) = 0$ for all $S$.

It remains to show that there are no extensions between summands on
different leaves.  We do this using the following two
lemmas.\footnote{The authors thank Carl Mautner for explaining the
  following two lemmas and their proofs.}
\begin{lemma}\label{perversity}
  The complex $i_*i^*\IC(S; K_S)$ of right D-modules is concentrated
  in degrees $\leq -2$.
\end{lemma}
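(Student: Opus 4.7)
The plan is to combine the defining support axioms of the intermediate extension $\IC(S;K_S)$ with the fact that symplectic leaves of $X$ pairwise have even (complex) codimension.

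I would first set up the standard distinguished triangle attached to the pair (open leaf $\subset$ closure, closed complement). Writing $j\colon S\hookrightarrow X$ for the locally closed inclusion of the leaf and recalling that $i\colon\bar S\smallsetminus S\to X$ is the closed inclusion of the boundary, there is a triangle
$$j_!K_S\;\longrightarrow\;\IC(S;K_S)\;\longrightarrow\;i_*i^*\IC(S;K_S)\;\xrightarrow{+1}$$
in the derived category of right D-modules on $X$. By the very definition of the intermediate extension, the first map is an epimorphism of perverse D-modules, so the mapping cone is the shift by $[1]$ of its kernel, which is a perverse D-module supported in the boundary. This already gives the weaker statement that $i_*i^*\IC(S;K_S)$ is concentrated in degrees $\leq -1$.

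To improve to $\leq -2$, I would invoke the fact that every symplectic leaf is even-dimensional (being a symplectic manifold), so that for any leaf $S'\subset\bar S\smallsetminus S$ strictly contained in $\bar S$ we have $\codim_{\bar S}(S')\geq 2$. Combining this with the stalk-wise support conditions for intermediate extensions (the Beilinson--Bernstein--Deligne axioms for IC), the cohomology of $i_{S'}^*\IC(S;K_S)$ on each such $S'$ is forced into degrees strictly below the generic perverse bound $-1$, namely $\leq -2$. Since $i\colon\bar S\smallsetminus S\to X$ is a closed immersion, the pushforward $i_*$ is $t$-exact for the natural $t$-structure on right D-modules, so the degree bound transfers unchanged from $\bar S\smallsetminus S$ to $X$.

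I expect the main obstacle to be the parity-improvement step: extracting a bound of $\leq -2$ rather than the generic $\leq -1$ from the IC support axioms. This will require a stratum-by-stratum analysis of the cohomology sheaves of $i_{S'}^*\IC(S;K_S)$ for each smaller leaf $S'\subset\bar S\smallsetminus S$, using the even codimension crucially—presumably through the support vanishing for middle perversity together with the fact that $K_S$ is a local system concentrated in a single perverse degree on $S$. The remaining ingredients—the distinguished triangle, the characterization of the intermediate extension as the image of $j_!\to j_*$, and the $t$-exactness of $i_*$—are standard.
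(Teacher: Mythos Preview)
Your reduction to showing that $i^*\IC(S;K_S)$ lies in perverse degrees $\leq -1$, together with the $t$-exactness of $i_*$, is fine and matches the paper. The gap is in the ``parity-improvement'' step.

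The support axioms for intermediate extensions do \emph{not} give a better upper bound on boundary strata when the codimension is large. In BBD conventions, for a stratum $S'\subset\bar S\smallsetminus S$ the stalk cohomology sheaves $\mathcal H^k\bigl(i_{S'}^*\IC(S;K_S)\bigr)$ are allowed to be nonzero precisely in the range $-\dim S\leq k\leq -\dim S'-1$; translating to perverse degrees on $S'$ this is the range $-\codim_{\bar S}(S')\leq {}^p k\leq -1$. So increasing the codimension widens the range downward but leaves the upper bound at $-1$. Even codimension alone does not kill ${}^p\!H^{-1}$, and there are many IC sheaves (Schubert varieties, for instance) with boundary strata of codimension $\geq 2$ on which ${}^p\!H^{-1}$ of the restriction is nonzero. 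Your proposed argument, as stated, would prove the lemma for \emph{any} IC sheaf on a variety whose strata all have even dimension, which is false.

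The paper's argument uses a genuinely additional ingredient that you have not invoked: $\IC(S;K_S)$ is a direct summand of $T=\rho_*\Omega_{\tilde X}$ by the decomposition theorem, so its stalk at any $x\in S'$ is a summand of $T_x$, whose cohomology is $\H^{*+\dim X}(\rho^{-1}(x);\bC)$. Kaledin's theorem \cite[1.9]{Kal09} says the fibers $\rho^{-1}(x)$ have cohomology only in even degrees; since $\dim X$ and $\dim S'$ are even, the stalk cohomology in degree $-\dim S'-1$ (which would produce ${}^p\!H^{-1}$) is forced to vanish. That geometric parity input about the resolution, not the IC axioms, is what drives the improvement from $-1$ to $-2$.
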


\begin{proof}
  It is a standard property of intermediate extensions of local
  systems that $i^* \IC(S; K_S)$ is concentrated in negative degrees.
  Since $i$ is a closed embedding, $i_*$ is exact, and thus $i_* i^*
  \IC(S; K_S)$ is concentrated in negative degrees.  Therefore we only
  have to show that $\H^{-1} i_* i^* \IC(S; K_S) = 0$.

  For a contradiction, let $S'$ be a maximal symplectic leaf in the
  closure of $S$ on which $\H^{-1} i_* i^* \IC(S;  K_S)$ is supported,
  and let $j_{S'}: S' \to X$ the inclusion.  Then $$\H^{-1} j_{S'}^*
  i_* i^* \IC(S; K_S) = \H^{-1} j_{S'}^* \IC(S;  K_S)$$ is a local
  system on $S'$.  By our assumption, the stalk of $\IC(S; K_S)$ at
  every point of $S'$ has nonzero cohomology in degree $-\dim S'-1$.
  
  Choose a point $x \in S'$.  The stalk $\IC(S; K_S)_x$ is a summand
  of $T_x$. But $\H^*(T_x)$ is the pushforward to a point of the
  restriction of $\Omega_{\tilde X}$ to the fiber $\rho^{-1}(x)$.
  This is the same for the formal neighborhood (or an analytic
  neighborhood) of $\rho^{-1}(x)$, thus we obtain the shifted
  topological cohomology $\H^{* + \dim X}(\rho^{-1}(x); \bC)$ of the
  fiber.  By \cite[1.9]{Kal09}, $\H^*(\rho^{-1}(x); \bC)$ is
  concentrated in even degrees, and hence the same is true for
  $\H^*(T_x)$. Since $\dim S'$ is even, this gives us a contradiction.
\end{proof}

\begin{lemma}\label{l:ext-ss'}
  Let $S \neq S'$ be symplectic leaves of $X$. Then
  $\Ext^1(\IC(S;K_S), \IC(S',K_{S'})) = 0$.
\end{lemma}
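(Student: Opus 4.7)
My plan is a d\'evissage based on the distinguished triangle
$$
j_!j^*\IC(S;K_S)\;\to\;\IC(S;K_S)\;\to\;i_*i^*\IC(S;K_S)\;\to,
$$
attached to the closed inclusion $i\colon\bar S\setminus S\hookrightarrow X$ of the boundary of $S$ and its open complement $j\colon X\setminus(\bar S\setminus S)\hookrightarrow X$. Because the symplectic leaves of $X$ form a stratification and $\bar{S'}$ is a union of leaves, every leaf of $X$ either lies in $\bar{S'}$ or is disjoint from it. Using Verdier duality---noting that $T=\rho_*\Omega_{\tilde X}$ is self-dual up to shift (since $\rho$ is proper and $\Omega_{\tilde X}$ is a self-dual right D-module on the smooth variety $\tilde X$), so that its IC-summands are permuted by duality---I would swap the roles of $S$ and $S'$ if necessary and thereby assume $S\not\subset\bar{S'}$, i.e., $S\cap\bar{S'}=\emptyset$.

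Next I would apply $\Ext^*(-,\IC(S';K_{S'}))$ to the triangle. By the $(j_!,j^*)$-adjunction, the $j_!$-term equals $\Ext^*_U(j^*\IC(S;K_S),\,j^*\IC(S';K_{S'}))$, where $U=X\setminus(\bar S\setminus S)$. The support of $j^*\IC(S;K_S)$ in $U$ is $S$, while the support of $j^*\IC(S';K_{S'})$ in $U$ is $\bar{S'}\cap U$, which by the previous step is disjoint from $S$; thus these two sheaves on $U$ have disjoint closed supports and all their Ext-groups vanish. The long exact sequence then yields
$$
\Ext^1(\IC(S;K_S),\IC(S';K_{S'}))\;\cong\;\Ext^1(i_*i^*\IC(S;K_S),\,\IC(S';K_{S'})).
$$
Finally, Lemma \ref{perversity} places $i_*i^*\IC(S;K_S)$ in cohomological degrees $\leq-2$, whereas $\IC(S';K_{S'})[1]$ lies in degrees $\geq-1$; the t-structure axiom $\Hom(D^{\leq n},D^{\geq n+1})=0$ with $n=-2$ then delivers the desired vanishing.

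The step I expect to be most delicate is the Verdier-duality reduction: one must verify that the list of IC-summands of $T$ is genuinely preserved under duality, so that Lemma \ref{perversity}---whose proof invokes the parity of $\H^*(\rho^{-1}(x);\bC)$ via \cite{Kal09}---applies equally well after swapping $S$ and $S'$. If that becomes awkward, an alternative route is to prove the costalk variant $i_*i^!\IC(S;K_S)\in D^{\geq 2}$ directly by the same parity argument applied to $i^!$ in place of $i^*$, and then run the symmetric d\'evissage based on the dual triangle $i_*i^!\IC(S';K_{S'})\to\IC(S';K_{S'})\to j_*j^*\IC(S';K_{S'})$ to dispose of the case $S\subset\bar{S'}$ without any appeal to Verdier duality.
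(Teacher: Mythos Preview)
Your proposal is correct and follows essentially the same route as the paper's proof: the paper likewise reduces via Verdier duality (using self-duality of $T$ to identify $\bD\IC(S;K_S)\cong\IC(S;K_S)$) to the case $S\not\subset\bar{S'}$, applies the same distinguished triangle, kills the $j_!$-term by adjunction and disjointness of $S$ from $\bar{S'}$, and kills the $i_*i^*$-term by Lemma~\ref{perversity} and the $t$-structure inequality. The only cosmetic difference is that the paper uses the locally closed inclusion $j_S\colon S\hookrightarrow X$ rather than the open complement $j\colon U\hookrightarrow X$, making the vanishing of the $j_!$-term a one-liner via $j_S^*\IC(S';K_{S'})=0$.
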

\begin{proof}
  Assume first that $S$ is not contained in the closure of $S'$.
  Thus, $S$ is disjoint from the closure of $S'$.  Let $j_S: S \to X$
  and $i_{S}: \bar S \setminus S \to X$ be the inclusions.  Then
  $j_S^* \IC(S', K_{S'}) = 0$.  We have the standard exact triangle
\[
\to (j_S)_! K_S \to \IC(S;  K_S) \to (i_S)_* i_S^* \IC(S;  K_S) \to .
\]
Apply $\Hom\left(-, \IC(S', K_{S'})\right)$, and we obtain in the long exact
sequence,
\begin{multline*}
  \to \Ext^1((i_S)_* i_S^* \IC(S;  K_S), \IC(S', K_{S'})) \to
  \Ext^1(\IC(S;  K_S), \IC(S', K_{S'})) \\ \to \Ext^1((j_S)_! K_S,
  \IC(S', K_{S'})) \to.
\end{multline*}
We want to show that the middle term is zero.  By adjunction, since
$j_S^* \IC(S', K_{S'}) = 0$, the last term is zero. It suffices
therefore to show that the first term is zero.  However, by Lemma
\ref{perversity}, $(i_S)_* i_S^* \IC(S;  K_S)$ has cohomology
concentrated in degrees $\leq -2$, whereas $\IC(S', K_{S'})$ is a
D-module (in degree zero).  Therefore, the first term is also
zero.\footnote{More generally, for any triangulated category with a
  $t$-structure, if $M$ is a complex whose cohomology is concentrated
  in negative degrees and $N$ is a complex whose cohomology is
  concentrated in nonnegative degrees, then $\Hom(M, N) = 0$.}

Next assume $S$ is contained in the closure of $S'$. Since $S \neq
S'$, $S'$ is not contained in the closure of $S$.  In this case,
applying Verdier duality,
$$\Ext^1(\IC(S;  K_S), \IC(S', K_{S'})) = \Ext^1(\bD \IC(S', K_{S'}), \bD 
\IC(S;  K_{S})).$$ But, since $\Omega_{\tilde X}$ is self-dual, so is
$T$, and hence $\bD \IC(S;  K_S) = \IC(S;  \bD K_S)$ is a summand of
$T$. Therefore, $\IC(S;  \bD K_S) \cong \IC(S;  K_S)$, and the same
holds for $S'$. Thus we again have $\Ext^1(\IC(S;  K_S), \IC(S', K_{S'}))
= 0$.
\end{proof}

\excise{
  \begin{proof}[Alternative proof] 
    First, suppose that neither $S$ nor $S'$ is contained in the
    closure of the other. Let $U$ be the complement of the union of
    the boundaries of $S$ and $S'$, which contains $S$ and $S'$ by
    assumption. Let $j:U \to X$ be the inclusion. Then the
    intermediate extensions of $K_S$ and $K_{S'}$ to $X$ are the
    intermediate extensions of their intermediate extensions to $U$,
    which do not extend nontrivially with each other.  Thus, every
    extension of $\IC(S; K_S)$ and $\IC(S', K_{S'})$ is the
    intermediate extension of $j^* \IC(S;  K_S) \oplus j^* \IC(S',
    K_{S'})$, which is the trivial extension.

    Assume now that $S'$ is contained in the closure of $S$.  It
    suffices to restrict to a formal (or analytic) neighborhood of
    $S'$.  To show that $\Ext^1(\IC(S; K_S), \IC(S',K_{S'}))$ is zero,
    it suffices to show that, in a formal (or analytic) neighborhood
    $U_s$ of every point $s \in S'$, with inclusion $j_{U_s}: U_s \to
    X$,
    \begin{equation}\label{e:ext-hom-van}
      \Ext^1(j_{U_s}^*\IC(S; K_S), j_{U_s}^*\IC(S',K_{S'})) = 0 =
      \Hom(j_{U_s}^*\IC(S; K_S), j_{U_s}^*\IC(S',K_{S'})).
    \end{equation}
    In this case, every nontrivial extension of $\IC(S',K_{S'})$ by
    $\IC(S; K_S)$ splits canonically in $U_s$ for all $s$, so that the
    extension splits globally on $S$.  (Put differently, the above
    implies that the sheaf of vector spaces $\mathcal{E}xt(\IC(S; K_S),
    \IC(S',K_{S'}))$ has stalks with cohomology vanishing in degrees
    $\leq 1$, and hence the same is true for its global sections.)

    The second equality in \eqref{e:ext-hom-van} is a consequence of
    the standard property of intermediate extensions: they are minimal
    and admit no nonzero homomorphisms to D-modules supported on the
    boundary. So it suffices to prove the first equality.

    Write $U_s \cong Z \hat \times \Delta^{\dim S'}$ for $Z$ a slice
    to $S'$ at $s$, with $S \cap U_s$ mapping isomorphically to $\{0\}
    \hat \times \Delta^{\dim S'}$.  Then $S' \cap U_s \cong Y \hat
    \times \Delta^{\dim S'}$ for $Y$ locally closed in $Z$.  Moreover,
    $K_{S'} \cong K_{Y} \hat \boxtimes \Omega_{\Delta^{\dim S'}}$
    where $K_Y$ is a local system on $Y$.  Let $\IC(Z; K_Y)$ be the
    intermediate extension of $K_Y$ to $Z$.  By the Kunneth theorem,
    $\Ext^1(j_{U_s}^* \IC(S; K_{S}), j_{U_s}^*\IC(S', K_{S'})) =
    \Ext^1(\IC(Z; K_Y), \delta^r)$, where $\delta$ is the delta-function
    D-module of the origin in $Z$, and $r$ is the rank of $K_{S'}$. We
    thus have to show that $\Ext^1(\IC(Z; K_Y), \delta)=0$.

    If we had a nontrivial extension,
    \begin{equation}\label{e:ext-delta-ic}
      0 \to \delta \to N \to \IC(Z; K_Y) \to 0,
    \end{equation}
    then the long exact sequence for the cohomology of the pushforward
    to a point would yield
    \[
    \H^{-1}\pi_*(\IC(Z; K_Y)) \to \bC \to \H^0\pi_*(N) \to
    \H^0\pi_*(\IC(Z; K_Y)) \to 0.
    \]
    The cohomology of $\pi_*(\IC(Z; K_Y) \boxtimes
    \Omega_{\Delta^{\dim S'}}) = \pi_*(j_{U_s}^* \IC(S',K_{S'}))$ is a
    summand of the cohomology of $\pi_*(j_{U_s}^*T)$, which is the
    shifted topological cohomology $\H^{\dim X + *}(\rho^{-1}(U_s))$.
    However, the topological cohomology of $\rho^{-1}(U_s)$ is
    concentrated in even degrees by \cite[1.9]{Kal09}, and since $\dim
    X$ and $\dim S'$ are even, we conclude that $\H^{-1}\pi_*(\IC(Z;
    K_Y)) = 0$.  Thus $\dim \H^0\pi_*(N) = \dim \H^0\pi_*(\IC(Z; K_Y))
    + 1$.

    Now, for any right D-module $M$ on $Z$ (or generally a formal
    scheme with one closed point), the canonical map $M \to \Hom(M,
    \delta)^* \otimes \delta$ is the maximal quotient of $M$ supported
    at the origin.  Thus, $\Hom(M,\delta) = (M \otimes_{\caD_Z}
    \cO_Z)^* = \H^0\pi_*(M)^*$. Therefore, when $\H^0\pi_*(M)$ is
    finite-dimensional (as is the case for $M$ holonomic), then
    $\H^0\pi_*(M)$ is the multiplicity space of the maximal quotient
    supported at the origin.  In particular, $\H^0\pi_*(\IC(Z; K_Y)) =
    0$.

    Applying the preceding two paragraphs, we conclude that $\dim
    \H^0\pi_*(N) = 1$, so $N$ has a quotient $N \twoheadrightarrow
    \delta$.  If we compose this with the map $\delta \into N$ of
    \eqref{e:ext-delta-ic}, the composition $\delta \into N \surj
    \delta$ must be an isomorphism, since $\Hom(N/\delta,\delta)
    =\Hom(\IC(Z; K_Y),\delta)=0$. Therefore this quotient splits the
    extension \eqref{e:ext-delta-ic}. Thus the extension is split, a
    contradiction. We conclude that $\Ext^1(\IC(Z; K_Y),\delta) = 0$ and
    hence $\Ext^1(\IC(S; K_S),\IC(S',K_{S'})) = 0$, as desired.

    Finally, if instead $S$ were contained in the closure of $S'$,
    then applying Verdier duality,
    \[
    \Ext^1(\IC(S; K_S), \IC(S', K_{S'})) = \Ext^1(\bD \IC(S', K_{S'}), \bD
    \IC(S; K_{S})).
    \]
    But, since $\Omega_{\tilde X}$ is self-dual, so is $T$, and hence
    $\bD \IC(S; K_S) = \IC(S; \bD K_S)$ is a summand of
    $T$. Therefore, $\IC(S; \bD K_S) \cong \IC(S; K_S)$, and the same
    holds for $S'$. Thus we again have $\Ext^1(\IC(S; K_S), \IC(S',
    K_{S'})) = 0$.
  \end{proof}
}

Putting together Lemma \ref{l:ext-ss'} and Proposition \ref{leaf
  semisimple}, \eqref{e:t-dec} immediately implies:
\begin{proposition}\label{T is rigid}
  The D-module $T$ is rigid; that is, $\Ext^1(T,T) = 0$.
\end{proposition}

\excise{
  \begin{lemma}\label{perversity}
    The sheaf $i_*i^*\IC(S; K_S)$ lies in the category ${^p\!D^{\leq
        -2}}$.
  \end{lemma}

  \begin{proof}
    By standard properties of $\IC$ sheaves, we know that
    $i_*i^*\IC(S; K_S)$ lies in the category ${^p\!D^{\leq -1}}$.
    That is, for any symplectic leaf $S'$ in the closure of $S$, we
    have
$$\H^k\!\big(i_*i^*\IC(S; K_S)\big)|_{S'} = \H^k\!\big(\IC(S; K_S)\big)|_{S'} = 0$$
for all $k> -\dim S' - 1$.  Thus we only need to show that
$\H^k\!\big(\IC(S; K_S)\big)|_{S'} = 0$ when $k = -\dim S' -1$.

We have $\H^k\!\big(\IC(S; K_S)\big)\subset \H^k(T)$, and the stalks
of $\H^k(T)$ are the $k^\text{th}$ cohomology groups of the fibers of
$\rho$.  Since $\dim S'$ is even and the odd cohomology groups of the
fibers of $\rho$ vanish \cite[1.9]{Kal09}, we have $\H^k(T) = 0$ when
$k = -\dim S' -1$.
\end{proof}

\begin{proposition}\label{T is rigid}
  The D-module $T$ is rigid; that is, $\Ext^1(T,T) = 0$.
\end{proposition}

\begin{proof}
  Since $T \cong \sum_S \IC(S; K_S)$, it is sufficient to prove that
  $\Ext^1\!\big(\IC(S; K_S), \IC(S', K_{S'})\big) = 0$ for any two
  symplectic leaves $S$ and $S'$ of $X$.  It is clear that we cannot
  have any nontrivial extensions unless $S'$ is contained in $\bar S$
  or $S$ is contained in $\bar S'$.  Furthermore, Proposition
  \ref{leaf semisimple} gives us our result when $S = S'$, so we may
  assume that $S$ and $S'$ are distinct.

  Let us assume first that $S'$ is contained in $\bar S$.  Let $j:S\to
  X$ and $i:\bar S\smallsetminus S\to X$ be the inclusions, and
  consider the exact triangle
$$j_! K_S[\dim S] \to \IC(S; K_S) \to i_*i^*\IC(S; K_S).$$
Applying the functor $\Hom\!\big(-, \IC(S', K_{S'})[1]\big)$ in the
constructible derived category, we obtain a long exact sequence
\begin{eqnarray*}\ldots\to\Hom\!\big(i_*i^*\IC(S; K_S),
  \IC(S',K_{S'})[1]\big)&\to&
  \Hom\!\big(\IC(S; K_S), \IC(S',K_{S'})[1]\big)\\
  &\to&
  \Hom\!\big(j_! K_S[\dim S], \IC(S',K_{S'})[1]\big)\to\ldots.
\end{eqnarray*}
The middle term is isomorphic to $\Ext^1\!\big(\IC(S; K_S), \IC(S',
K_{S'})\big)$, so we want to show that the other two terms vanish.  We
have $$\Hom\!\big(j_! K_S[\dim S], \IC(S',K_{S'})[1]\big) \cong
\Hom\!\big(K_S[\dim S], j^!\IC(S',K_{S'})[1]\big) = 0$$ because $S$ is
disjoint from the support $\bar S'$ of $\IC(S',K_{S'})$.  To see that
the first term is zero, we observe that $i_*i^*\IC(S; K_S)\in
{^p\!D^{\leq -2}}$ by Lemma \ref{perversity} and $\IC(S',K_{S'})[1]\in
{^p\!D^{-1}}$ because $\IC(S',K_{S'})$ is perverse.  A standard
argument, making use of the fact that there are no nontrivial negative
Ext groups between perverse sheaves, implies that the $\Hom$ group
vanishes.  \nicktodo{Should we say more here?}

Now suppose that $S$ is contained in $\bar S'$.  In this case, 
$$\Ext^1\!\big(\IC(S; K_S), \IC(S', K_{S'})\big) \cong
\Ext^1\!\big(\mathbb{D}\IC(S'; K_{S'}), \mathbb{D}\IC(S; K_{S})\big).$$
Since $\mathbb{D}\IC(S; K_{S})$ and $\mathbb{D}\IC(S', K_{S'})$ are
indecomposable summands of $T$ with the same supports as $\IC(S;
K_{S})$ and $\IC(S', K_{S'})$, the argument for the vanishing of this
$\Ext^1$ group is identical to the argument in the previous case.
\end{proof}
}

The following corollary is an immediate consequence of Propositions
\ref{semisimp} and \ref{T is rigid}.

\begin{corollary}\label{quotient}
  $M_0$ is isomorphic to $T$.
\end{corollary}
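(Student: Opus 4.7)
The plan is to combine the two inputs Propositions \ref{semisimp} and \ref{T is rigid} in the standard way: semisimplification plus rigidity forces a module to split as a direct sum of its composition factors. Concretely, I would argue that $M_0$ is already semisimple, and then invoke the fact that its semisimplification equals $T$ to conclude $M_0 \cong T$.

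To show $M_0$ is semisimple, I would proceed by induction on the length of $M_0$ (which is finite since $M_0$ is holonomic). Let $L \subset M_0$ be a simple submodule and consider the short exact sequence
$$0 \to L \to M_0 \to M_0/L \to 0.$$
By Proposition \ref{semisimp}, the composition factors of $M_0$ are exactly the simple summands of $T$ (with multiplicity one, in fact, though this is not needed). In particular, the composition factors of both $L$ and $M_0/L$ are summands of $T$. By the inductive hypothesis, $M_0/L$ is semisimple, and is a direct sum of summands of $T$; in particular it is itself a summand of (a direct sum of copies of) $T$. The extension class of the displayed sequence then lies in $\Ext^1(M_0/L, L)$, which is a subquotient of $\Ext^1(T, T)$ (or rather, is built from summands of this group). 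By Proposition \ref{T is rigid}, this group vanishes, so the sequence splits and $M_0 \cong L \oplus (M_0/L)$ is semisimple.

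Once $M_0$ is known to be semisimple, it is isomorphic to its own semisimplification, which by Proposition \ref{semisimp} is (noncanonically) isomorphic to $T$.

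There is no real obstacle here beyond being careful about how to package the rigidity statement $\Ext^1(T,T)=0$ into a vanishing of $\Ext^1$ between arbitrary subobjects and quotients whose composition factors are summands of $T$; the cleanest way is to note that if $A$ and $B$ are two D-modules all of whose composition factors appear as summands of $T$, then a simple devissage reduces $\Ext^1(A, B)$ to Ext groups between simple summands of $T$, which vanish because $\Ext^1$ between two simple summands of a semisimple module with no self-extensions is $0$ (the summands embed as subquotients of $T$ via its decomposition \eqref{e:t-dec}). This verifies $\Ext^1(M_0/L, L) = 0$ and completes the argument.
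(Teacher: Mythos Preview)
Your proposal is correct and is precisely the standard argument that the paper leaves implicit: the paper simply states that the corollary is ``an immediate consequence of Propositions \ref{semisimp} and \ref{T is rigid}'' without further detail. Your d\'evissage spelling out why $\Ext^1(T,T)=0$ forces any holonomic module with the same composition factors as $T$ to be semisimple is exactly what underlies that one-line claim.
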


\section{The main theorem}
For each leaf $S$, choose a point $s\in S$, and let $X_S$ be a
formal slice to $S$ at $s$.  Then the base change of $\rho$ along
the inclusion of $X_S$ into $X$ induces a projective symplectic resolution
$\tX_S\to X_S$.  The following theorem asserts that, if
\cite[Conjecture 1.3.(a)]{ES11} holds for each $X_S$, then
\cite[Conjecture 1.3.(c)]{ES11} holds for $X$.

\begin{theorem}\label{main}
  Suppose that, for every leaf $S$ of $X$, $\dim\hpz(X_S) = \rk K_S$.
  Then $M \cong T$.
\end{theorem}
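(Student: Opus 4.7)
The plan is to upgrade the surjection $M\twoheadrightarrow M_0\cong T$ obtained from Corollary \ref{quotient} to an isomorphism, by showing that its kernel vanishes. This will follow from the hypothesis via an additive invariant on the Grothendieck group of holonomic D-modules supported on the union of leaf closures that computes $\dim\hpz(X_S)$ on $M$ and $\rk K_S$ on $T$ at every leaf $S$.

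First I would introduce, for each symplectic leaf $S\subset X$, a ``leaf multiplicity'' function $m_S$ on this Grothendieck group, defined on simples by $m_S(\IC(S';L)) = \rk L$ if $S=S'$ and $0$ otherwise, and extended additively in short exact sequences. Applied to $T$ via the decomposition \eqref{e:t-dec}, this immediately yields $m_S(T) = \rk K_S$.

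The main step is the identification $m_S(M) = \dim\hpz(X_S)$. Here one invokes the local structure of $M(X)$ established in \cite{ESL}: in the formal neighborhood of a point $s\in S$, $X$ factors as a product of $S$ with the formal slice $X_S$, and under this factorization $M$ decomposes (up to twist by the canonical bundle on $S$) as the external product of a local system on $S$ with $M(X_S)$. Consequently, $m_S(M)$ is the multiplicity of the delta D-module at the closed point in a composition series for $M(X_S)$, and a further result of \cite{ESL} identifies this multiplicity with $\dim\hpz(X_S)$.

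Combining these two computations with the hypothesis yields $m_S(M) = m_S(T)$ for every leaf $S$. Let $K$ denote the kernel of $M\twoheadrightarrow T$. Additivity of $m_S$ gives $m_S(K)=0$ for all $S$; on the other hand, since $M$ is a Poisson-type holonomic D-module, every simple composition factor of $M$ (hence of $K$) is of the form $\IC(S';L)$ for some leaf $S'$ and simple local system $L$, for which $m_{S'}(\IC(S';L))=\rk L>0$. Thus $K$ has no composition factors and $K=0$, so $M\cong T$. The principal obstacle is the identity $m_S(M)=\dim\hpz(X_S)$; this encodes the compatibility of the construction $X\mapsto M(X)$ with the symplectic leaf stratification and with formal localization, which must be extracted carefully from \cite{ESL}.
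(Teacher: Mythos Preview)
Your Grothendieck-group strategy is clean, but the linchpin identity $m_S(M)=\dim\hpz(X_S)$ is not available as an input, and this is a genuine gap. For a general holonomic right D-module $N$ on a cone with vertex $0$, the Jordan--H\"older multiplicity of $\delta_0$ in $N$ need not equal $\dim H^0\pi_*N$: already on $\bC$, the module $j_!\Omega_{\bC\smallsetminus\{0\}}$ has $\delta_0$ as a composition factor while $H^0\pi_*$ vanishes. So the identity would have to be a special feature of $M(X_S)$, and the results of \cite{ESL} invoked in this paper do not supply it. What \cite[\S 4.3, 4.10]{ESL} gives (see the proof of Corollary~\ref{c:m-dec}) is that each $\IC(S;L_S)$, with $\rk L_S=\dim\hpz(X_S)$, occurs as a subquotient of $M$; this yields only $m_S(M)\ge\dim\hpz(X_S)$, which is the useless direction for your argument (it gives $m_S(N)\ge 0$, not $m_S(N)=0$). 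Indeed, the full composition series of $M$ is established in this paper only \emph{after} Theorem~\ref{main}, as Corollary~\ref{c:m-dec}, so you cannot invoke it here.

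The paper's proof circumvents this by never computing $m_S(M)$. It selects a \emph{maximal} leaf $S$ in the support of the kernel $N$, so that on the slice $N_S$ is purely a sum of delta modules; then it uses the vanishing $H^{-1}\pi_*(T_S)\cong H^{\dim X_S-1}(\rho^{-1}(s);\bC)=0$ from \cite[2.12]{KalPPV} to obtain an honest short exact sequence
\[
0\longrightarrow H^0\pi_*(N_S)\longrightarrow \hpz(X_S)\longrightarrow H^0\pi_*(T_S)\longrightarrow 0,
\]
whence the hypothesis $\dim\hpz(X_S)=\rk K_S=\dim H^0\pi_*(T_S)$ forces $H^0\pi_*(N_S)=0$ and so $N_S=0$. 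The odd-cohomology vanishing for $T_S$ is precisely the ingredient your multiplicity argument lacks: it controls the connecting map in the long exact sequence and prevents the $j_!$-type phenomenon above.
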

Note that, in the above theorem, the isomorphism $M \cong T$ is
\emph{not} canonical.  This can be corrected as follows.  Let $i: S
\to X$ be the inclusion, and let $L_S := \H^0(i^*M)$ be the local
system studied in \cite[\S 4.3]{ESL}.\footnote{It is not {\em a
    priori} clear that $L_S$ has regular singularities, though this
  will follow from Corollary \ref{c:m-dec}.}  The fiber $L_{S,s}$ of
$L_S$ at the point $s$ is canonically isomorphic to the vector space
$\hpz(X_S)$ \cite[4.10]{ESL}.

\begin{corollary}\label{c:m-dec}
  Under the hypothesis of Theorem \ref{main}, there is a canonical
  isomorphism $M \cong \bigoplus_S \IC(S; L_S)$, and a noncanonical
  isomorphism $L_S \cong K_S$ for each symplectic leaf $S$.
\end{corollary}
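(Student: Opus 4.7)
The plan is to bootstrap from Theorem~\ref{main} via the canonical decomposition of semisimple holonomic $\caD$-modules by the supports of their simple summands. By Theorem~\ref{main} we have $M \cong T$ (non-canonically), and by \eqref{e:t-dec}, $T$ is semisimple; hence $M$ is semisimple. For any semisimple holonomic $\caD$-module on $X$, grouping simple summands by their support yields a \emph{canonical} decomposition: let $M_S$ denote the submodule obtained as the sum of those simple summands of $M$ whose support equals $\bar S$. This gives a canonical $M = \bigoplus_S M_S$, and each $M_S$ is necessarily of the form $\IC(S; L_S')$ for a (semisimple) local system $L_S'$ on $S$, since the simple holonomic $\caD$-modules with support $\bar S$ and regular singularities are exactly the IC extensions of irreducible local systems on $S$.

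To identify $L_S'$ with $L_S = \H^0(i_S^* M)$, I would apply $\H^0 i_S^*$ to this decomposition. The summand $\IC(S; L_S')$ contributes $L_S'$ itself, since the intermediate extension restricts to the original local system on its open stratum. Every other summand contributes zero: if $S \not\subset \bar{S'}$, then $i_S^* \IC(S'; L_{S'}') = 0$ for trivial support reasons; if $S \subsetneq \bar{S'}$, then the minimality of the intermediate extension (the standard perversity bound for IC sheaves, of which Lemma~\ref{perversity} is a refined instance) forces $i_S^* \IC(S'; L_{S'}')$ to be concentrated in strictly negative degrees. Hence $L_S \cong L_S'$, which simultaneously shows that $L_S$ is a local system (settling the footnote about regular singularities) and yields the canonical isomorphism $M \cong \bigoplus_S \IC(S; L_S)$.

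For the noncanonical isomorphism $L_S \cong K_S$, I would run the identical computation with $T$ in place of $M$, using the decomposition \eqref{e:t-dec}, to obtain $\H^0 i_S^* T \cong K_S$. Any choice of isomorphism $M \cong T$ supplied by Theorem~\ref{main} then induces a noncanonical isomorphism $L_S \cong K_S$ upon applying $\H^0 i_S^*$. No significant obstacle is anticipated: once Theorem~\ref{main} is in hand the argument is formal, the only ingredients being the canonicity of the support-grading of a semisimple holonomic $\caD$-module and the standard perversity vanishing of $\H^0$ on deeper strata.
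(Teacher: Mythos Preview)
Your proof is correct, but it is organized differently from the paper's argument. The paper first establishes $L_S \cong K_S$ by observing (via \cite[\S 4.3]{ESL}) that $\IC(S;L_S)$ is a subquotient of $M \cong T$, hence $L_S$ is a subquotient of $K_S$, and then invoking the rank equality $\rk L_S = \dim \hpz(X_S) = \rk K_S$ coming from the hypothesis of Theorem~\ref{main}. It then builds the canonical surjections $M \to \IC(S;L_S)$ via the adjunction map $M \to \H^0(i_* L_S)$ and appeals to Proposition~\ref{T is rigid} to conclude that the sum $M \to \bigoplus_S \IC(S;L_S)$ is an isomorphism. Your route instead extracts the canonical decomposition directly from the support-grading (a coarsening of the isotypic decomposition) of the semisimple module $M$, and then identifies both $L_S$ and $K_S$ as $\H^0 i_S^*$ applied to $M$ and $T$ respectively, transporting one to the other along a chosen isomorphism $M \cong T$. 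Your argument is more streamlined in that it does not re-invoke the rank hypothesis or rigidity---those have already done their work inside the proof of Theorem~\ref{main}. The paper's argument, on the other hand, makes the canonical map explicit via adjunction and gives an independent reason for $L_S \cong K_S$ (subquotient of equal rank) that does not depend on first choosing an isomorphism $M \cong T$.
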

\begin{proof}
  As explained in \cite[\S 4.3]{ESL}, the right D-modules $\IC(S;
  L_S)$ are subquotients of $M$. Theorem \ref{main} and Equation
  \eqref{e:t-dec} together imply that $L_S$ is a subquotient of $K_S$
  for all $S$.  Since $\rk L_S = \dim\hpz(X_S) = \rk K_S$, $K_S$ and
  $L_S$ are in fact isomorphic.

  Consider the canonical adjunction morphism $M \to H^0(i_* L_S)$,
  which induces a surjection from $M$ to $\IC(S; L_S)$.  By
  Proposition \ref{T is rigid}, the map $M\to \bigoplus_S \IC(S; L_S)$
  is an isomorphism.
\end{proof}

\begin{proofmain}
  Let $N$ be the kernel of the surjection $M\to M_0\cong T$, so that
  we have a short exact sequence
$$0\to N\to M\to T\to 0$$ of right D-modules on $X$.
\excise{ Our first claim is that the support of $N$ is disjoint from
  the open leaf $\becircled{X}\subset X$.  Indeed, since the inclusion
  of $\becircled{X}$ into $X$ is an open Poisson embedding of a smooth
  symplectic variety, the restriction of $M$ to $\becircled{X}$ is
  isomorphic to the canonical sheaf.  Since $\rho$ is an isomorphism
  over $\becircled{X}$, the restriction of $T = \rs$ to
  $\becircled{X}$ is also the canonical sheaf.  It is easy to check
  that the map between them is an isomorphism.  }  Assume for the sake
of contradiction that the support of $N$ is nontrivial.  It is
necessarily a union of symplectic leaves; let $S$ be a maximal such
leaf.  Restrict to the formal neighborhood of the leaf $S$.  Then $N,
M$, and $T$ are local systems along $S$ (that is, upon restriction to
a contractible analytic open neighborhood $U$ of every point of $S$,
they become external tensor products of local systems on $U$ and
D-modules on the normal slice).  We can therefore make use of the
exact restriction functor for such D-modules to the slice $X_S$ at
$s$, given by $P \mapsto P_S := i_{X_S}^*P[-\dim S]$ (for $i_{X_S}$
the inclusion of $X_S$ into the formal neighborhood of $S$), and we
obtain the exact sequence
 $$0\to N_S\to M_S\to T_S\to 0.$$
 By functoriality and the definitions of $M$ and $T$, $M_S$ is
 isomorphic to $M(X_S)$, and $T_S$ is isomorphic to the derived
 pushforward of the canonical sheaf of $\tX_S$.

Let $\pi$ be the pushforward of $X_S$ to a point.  We
have $$\H^{-1}\pi_*(T_S) \cong \H^{\dim X_S - 1}\!\big(\rho^{-1}(s);
\bC\big) = 0$$ by \cite[2.12]{KalPPV}.  Also, $N_S$ is a
delta-function D-module at $s$, so $\pi_* N_S$ is concentrated in
degree zero.  Thus, we obtain a short exact sequence
$$0 \to \H^0\pi_*(N_S)\to \H^0\pi_*(M_S)\to \H^0\pi_*(T_S)\to 0.$$
We have $$\H^0\pi_*(T_S) \cong \H^{\dim X_S}(\rho^{-1}(s); \bC).$$ By
assumption, we also have $$\dim \H^0\pi_*(M_S) = \dim \hpz(X_S) = \rk
K_{S} = \dim \H^{\dim X_S}\!\big(\rho^{-1}(s); \bC\big).$$ Thus
$\H^0\pi_*(M_S)$ and $\H^0\pi_*(T_S)$ have the same dimension, and
therefore $\H^0\pi_*(N_S)=0$.  This means that $N_S = 0$, which is a
contradiction.
\end{proofmain}

Let $\pi$ be the map from $X$ to a point.  By definition, we have
$\hp_k(X) := \H^{-k} \pi_*(M)$.  By the de Rham theorem, we have an
isomorphism $\H^{-k} \pi_*(T) \cong \H^{\dim X - k}(\tX; \bC)$.  Thus,
as explained by Etingof and the second author \cite[1.3]{ES11},
Theorem \ref{main} implies that the Poisson-de Rham homology of $X$ is
isomorphic to the de Rham cohomology of $\tX$.

\begin{corollary}\label{conjecture b}
  Under the hypothesis of Theorem \ref{main}, we have a (noncanonical)
  isomorphism $\hp_k(X)\cong \H^{\dim X - k}(\tX; \bC)$ for all $k$.
\end{corollary}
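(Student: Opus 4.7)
The plan is essentially to chain together two facts that the text has already set up. First, Theorem \ref{main} gives a (noncanonical) isomorphism $M \cong T$ of right D-modules on $X$. Applying the derived pushforward $\pi_*$ and taking cohomology in degree $-k$ yields
\[
\hp_k(X) \;=\; \H^{-k}\pi_*(M) \;\cong\; \H^{-k}\pi_*(T).
\]
So the corollary reduces to identifying $\H^{-k}\pi_*(T)$ with $\H^{\dim X - k}(\tX;\bC)$.

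For the second step, I would use the factorization $\pi\circ\rho:\tX\to\mathrm{pt}$. Since $T = \rs$ by definition, we have
\[
\pi_*(T) \;=\; \pi_*\rho_*\Omega_{\tX} \;=\; (\pi\circ\rho)_*\Omega_{\tX}.
\]
Because $\tX$ is smooth, the de Rham theorem for D-modules identifies the cohomology of $(\pi\circ\rho)_*\Omega_{\tX}$ with the shifted topological cohomology of $\tX$: namely, $\H^{-k}(\pi\circ\rho)_*\Omega_{\tX} \cong \H^{\dim\tX - k}(\tX;\bC)$. Since $\rho:\tX\to X$ is a resolution of singularities, $\dim\tX = \dim X$, so the right-hand side is $\H^{\dim X - k}(\tX;\bC)$, as required.

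There is no real obstacle here: both inputs are already in hand. Theorem \ref{main} does all the work of comparing the Poisson D-module $M$ with the topologically-flavored object $T$, and the passage from $T$ to ordinary cohomology of $\tX$ is the classical de Rham theorem (indeed, this is exactly the remark made in the paragraph immediately preceding the corollary). The noncanonicity of the isomorphism $\hp_k(X) \cong \H^{\dim X - k}(\tX;\bC)$ simply reflects the noncanonicity of $M \cong T$ in Theorem \ref{main}; one can of course promote this to a canonical statement via Corollary \ref{c:m-dec} by decomposing both sides according to symplectic leaves.
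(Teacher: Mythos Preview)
Your argument is correct and is exactly the paper's own: use Theorem \ref{main} to obtain $M\cong T$, push forward to a point, and invoke the de Rham theorem to identify $\H^{-k}\pi_*(T)$ with $\H^{\dim X-k}(\tX;\bC)$. The paper states precisely this in the paragraph immediately preceding the corollary and offers no further proof.
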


\begin{corollary}\label{poincare}
  Any two conical projective symplectic resolutions of $X$ have the
  same Betti numbers.
\end{corollary}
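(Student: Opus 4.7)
The plan is to deduce this corollary directly from Corollary \ref{conjecture b}. Applied to any conical projective symplectic resolution $\rho\colon \tilde X \to X$, that corollary yields a (noncanonical) isomorphism
\[
\hp_k(X) \;\cong\; \H^{\dim X - k}(\tilde X;\bC).
\]
The decisive observation is that the left-hand side is manifestly an invariant of $X$ alone: by definition $\hp_k(X) = \H^{-k}\pi_*(M(X))$, and the right D-module $M(X)$ depends only on the Poisson variety $X$, not on any choice of resolution.

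Applying this to two conical projective symplectic resolutions $\rho\colon \tilde X\to X$ and $\rho'\colon \tilde X'\to X$ therefore yields
\[
\H^{\dim X - k}(\tilde X;\bC) \;\cong\; \hp_k(X) \;\cong\; \H^{\dim X - k}(\tilde X';\bC)
\]
for every $k$. Taking dimensions, and noting that $\dim \tilde X = \dim X = \dim \tilde X'$ since symplectic resolutions are birational over $X$, gives $b_j(\tilde X) = b_j(\tilde X')$ for all $j$, as desired.

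I expect no genuine obstacle at this step: the argument is purely formal, amounting to the tautology that an invariant of $X$ cannot depend on an auxiliary choice of resolution. All the real work has been absorbed into Theorem \ref{main} and hence into Corollary \ref{conjecture b}; in particular, this corollary inherits the same hypothesis, namely that $\dim \hpz(X_S) = \rk K_S$ for every symplectic leaf $S \subset X$. If one wanted an unconditional proof, the natural route would be to invoke Proposition \ref{semisimp} and argue directly that the semisimple D-module $T = \rho_*\Omega_{\tilde X}$ is determined up to isomorphism by $X$ (for instance, via its $\cs$-equivariant Grothendieck class together with Kaledin's purity result on fibers), but this is unnecessary under the stated hypothesis.
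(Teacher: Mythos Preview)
Your proposal is correct and matches the paper's intended argument: the paper gives no explicit proof of this corollary, leaving it as an immediate consequence of Corollary~\ref{conjecture b} applied to each resolution, with the key point being exactly that $\hp_*(X)$ is intrinsic to $X$. Your observation that the corollary inherits the hypothesis of Theorem~\ref{main} (and your remark about an unconditional route) is appropriate additional commentary; the paper itself handles the unconditional version only by citing Namikawa in the subsequent remark.
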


\begin{remark}
  In fact, it is possible to show that any two conical projective
  symplectic resolutions of $X$ have canonically isomorphic cohomology
  rings; this follows from \cite[25]{Namiflop}.
\end{remark}

\begin{example}\label{first hypertoric}
  Let $\cA$ be a coloop-free, unimodular, rational, central hyperplane
  arrangement, and let $\XA$ be the associated hypertoric variety
  \cite[\S 1]{PW07}.  Any simplification $\tcA$ of $\cA$ determines a
  conical projective symplectic resolution $\tXA$ of $\XA$.  The
  symplectic leaves of $\XA$ are indexed by coloop-free flats of
  $\cA$, and the slice to the leaf indexed by $F$ is isomorphic to a
  formal neighborhood of the cone point of $X(\cA_F)$, where $\cA_F$
  is the localization of $\cA$ at $F$ \cite[\S 2]{PW07}.  Hence the
  hypothesis of Theorem \ref{main} is that, for every coloop-free
  flat $F$,
$$\dim\hpz(X(\cA_F)) = \dim\H^{2\rk F}\!\!\big(\tX(\tcA_F); \bC\big).$$
This is proved in \cite[3.2]{Pr12}, hence Theorem \ref{main} holds for
hypertoric varieties.
\end{example}

\section{Weights}
We assume throughout this section that the hypothesis of Theorem
\ref{main} is satisfied. 

By homogeneity of the Poisson bracket, the vector space $$\hpz(X)
\cong \bC[X]\big{/}\{\bC[X], \bC[X]\}$$ inherits a grading from the
action of $\cs$.  Moreover, the D-module $M$ has a canonical weak
$\cs$-equivariant structure, thus $\hp_k(X) = \H^{-k} \pi_*(M)$ is
naturally graded for all $k$ (where $\pi$ is the map from $X$ to a
point).


Let $n$ be the positive integer such that the Poisson bracket on $X$
has weight $-n$ (this weight must be negative since the bracket
vanishes along $\rho^{-1}(0)$ in the resolution $\tX$).  Suppose that,
for every symplectic leaf $S$, the normal slice $X_S$ admits a conical
$\cs$-action equipping the Poisson bracket on $X_S$ with the same
weight $-n$.  More generally, we can suppose $X_S$ to be equipped with
a vector field $\xi$ such that $L_\xi \pi_{X_S} = -n \pi_{X_S}$, where
$\pi_{X_S}$ is the Poisson bivector (that is, we only require an
infinitesimal action of $\cs$).  In fact, this is no additional
assumption: such a $\xi$ always exists by virtue of the
Darboux-Weinstein decomposition $\hat X_s \cong \hat S_s \hat \times
X_S$ \cite[2.3]{KalPPV}.  If $p: \hat X_s \to X_S$ is the projection,
we may take $\xi = p_* (\Eu_{\hat X_s}|_{\{0\} \times X_S})$, where
$\Eu_{\hat X_s}$ is the vector field for the $\cs$-action.
However, we impose no requirement that the vector field $\xi$ be
obtained in this way.

\excise{ , and that for every $S$
  there is a point $s \in S$ such that the formal neighborhood of $X$
  along the punctured line $\cs \cdot s$ decomposes as a product of
  $\cs$-equivariant Poisson schemes,
\begin{equation}\label{e:pr-ass}
\hat X_{\cs \cdot s} \cong \hat S_{\cs \cdot s} \hat \times X_S.
\end{equation}
We will explain below that this technical hypothesis is satisfied for
hypertoric varieties.
}

Let $\pi$ be the Poisson bivector on $X$, and let $\theta$ be any
vector field such that $L_\theta \pi = c \pi$ for some $c \in \bC$.
Then the bracket of $\theta$ with any Hamiltonian vector field is
again Hamiltonian, thus left multiplication by $\theta$ is an
endomorphism of the right D-module $M(X)$.  It is the zero
endomorphism if and only if $\theta$ is Hamiltonian (in which case
$c=0$).  Since $\IC(S; L_S)$ is (canonically) a quotient of $M(X)$, this
also induces an endomorphism of $\IC(S; L_S)$, and hence of $L_S$ and
its fiber $L_{S,s}$.
In the case of the Euler vector field $\Eu_X$,
which is induced by an honest action of $\cs$, this endomorphism must
be semisimple.  By the same construction, the vector field $\xi$ on
$X_S$ induces an endomorphism of $M(X_S)$, and therefore of the vector
space $\hpz(X_S)$.  In this case, since we do not assume that $\xi$
integrates to an honest action of $\cs$, we do not know {\em a priori}
that the endomorphism is semisimple.  The following result says that
it is, and that the induced gradings on $L_{S,s} \cong \hpz(X_S)$
agree up to a shift.

\begin{theorem}\label{weights}
  The endomorphism of $\hpz(X_S)$ induced by $\xi$ is semisimple, and
  the canonical vector space isomorphism $L_{S,s} \to \hpz(X_S)[n\dim
  S/2]$ respects the weight gradings.
\end{theorem}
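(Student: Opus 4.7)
The plan is to reduce the statement to the Darboux-Weinstein decomposition $\hat X_s \cong \hat S_s \hat\times X_S$ of \cite[2.3]{KalPPV}, which splits both the Poisson structure and (up to a Hamiltonian correction) the relevant vector fields, and then to transport weight information across the functor $M$.

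First I would pass to the formal neighborhood of $s \in S$ and invoke the decomposition $\hat X_s \cong \hat S_s \hat\times X_S$. Since $M$ is defined intrinsically from the Poisson structure and since the Poisson bivector $\pi$ is itself a product $\pi_{\hat S_s} \hat\boxplus \pi_{X_S}$ under this decomposition, one has a K\"unneth-type identification
\[
M|_{\hat X_s} \;\cong\; \Omega_{\hat S_s} \hat\boxtimes M(X_S),
\]
under which the local system $L_S$ restricted to the formal neighborhood of $s$ in $S$ corresponds to $\Omega_{\hat S_s} \hat\boxtimes \hpz(X_S)$, where $\hpz(X_S)$ sits as $H^0$ of the pushforward of $M(X_S)$ to a point (\cite[4.10]{ESL}). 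Taking the fiber at $s$ then produces the canonical isomorphism $L_{S,s} \cong (\Omega_{\hat S_s})_s \otimes \hpz(X_S)$.

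Second, I would compare the two endomorphisms. By the definition $\xi = p_*(\Eu_X|_{\{0\}\times X_S})$, the vector field $\Eu_X$ restricted to $\hat X_s$ agrees with $\Eu_{\hat S_s} + \xi$ modulo a Hamiltonian vector field, since both sides scale the bivector by $-n$ and both restrict correctly to $\{0\}\times X_S$ and to $\hat S_s \times\{0\}$; the discrepancy is Poisson of weight $0$, hence locally Hamiltonian on a formal Poisson polydisk. Because Hamiltonian vector fields act by zero on $M$, the endomorphism induced by $\Eu_X$ on $M|_{\hat X_s}$ equals the sum of the endomorphism induced by $\Eu_{\hat S_s}$ on $\Omega_{\hat S_s}$ and the endomorphism induced by $\xi$ on $M(X_S)$. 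On the fiber at $s$, the first summand acts by the scalar weight of the generator $\omega_S^{\dim S/2}$ of the line $(\Omega_{\hat S_s})_s$; since $\omega_S$ has weight $n$ (inverse to the bracket of weight $-n$), this scalar is $n\dim S/2$.

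Third, semisimplicity and the weight shift follow at once. The Euler field $\Eu_X$ integrates to an honest $\cs$-action on $X$, so its induced endomorphism on $L_{S,s}$ is semisimple with integer weights; subtracting the scalar $n\dim S/2$ coming from the leaf factor shows that $\xi$ acts semisimply on $\hpz(X_S)$ and that the canonical isomorphism $L_{S,s} \to \hpz(X_S)[n\dim S/2]$ matches weights.

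The main obstacle is justifying the K\"unneth decomposition $M|_{\hat X_s} \cong \Omega_{\hat S_s}\hat\boxtimes M(X_S)$ in the formal setting, together with the identification of the slice component of $\Eu_X$ with $\xi$ up to a Hamiltonian error; once this is in place, the semisimplicity and the numerical value $n\dim S/2$ of the shift are immediate from unwinding the action on the distinguished generator $\omega_S^{\dim S/2}$ of the top form on the leaf direction.
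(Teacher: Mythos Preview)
Your outline follows essentially the same route as the paper: pass to the Darboux--Weinstein decomposition, identify $M(\hat X_s)\cong \Omega_{\hat S_s}\hat\boxtimes M(X_S)$, show that $\Eu_X$ and $\Eu_{\hat S_s}+\xi$ differ by a Hamiltonian vector field and hence induce the same endomorphism of $M$, and then read off the scalar coming from the leaf factor. Two points, however, need attention.

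First, the step ``the discrepancy is Poisson of weight $0$, hence locally Hamiltonian on a formal Poisson polydisk'' is not a justification here: $\hat X_s\cong \hat S_s\hat\times X_S$ is \emph{not} a formal polydisk, because $X_S$ is singular. Showing that every (formal) Poisson vector field on $\hat X_s$ is Hamiltonian is genuinely nontrivial; the paper proves this separately (Proposition~\ref{ham}) by passing to the regular locus, using the symplectic resolution, semismallness, and the vanishing of odd cohomology of fibers to get $\H^1(\hat X_s^{\reg})=0$. Without this, you have not established that $\Eu_X$ and $\Eu_{\hat S_s}+\xi$ act the same on $M$. (The same lemma is also what lets you pass from the particular $\xi=p_*(\Eu_X|_{\{0\}\times X_S})$ you chose to an arbitrary $\xi$ with $L_\xi\pi_{X_S}=-n\pi_{X_S}$, as the theorem requires.)

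Second, there is a sign slip in your computation on the leaf factor. The action of a vector field $\theta$ on the right $\caD$-module $\Omega_{\hat S_s}$ is by $-L_\theta$ (the Lie derivative map to differential operators is an antihomomorphism), so
\[
\Eu_{\hat S_s}\cdot \omega_S^{\dim S/2}=-L_{\Eu_{\hat S_s}}\omega_S^{\dim S/2}=-(n\dim S/2)\,\omega_S^{\dim S/2},
\]
and the scalar is $-n\dim S/2$, not $+n\dim S/2$. This is exactly the sign that makes the shift $[n\dim S/2]$ come out correctly.
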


In many cases, including hypertoric and Nakajima
quiver varieties, the local systems $L_S \cong K_S$ are trivial.
This allows us to conclude the following two corollaries. 

\begin{corollary}\label{Mweights} If the local systems $\{K_S\}$ are trivial, then there is an isomorphism of
  weakly $\cs$-equivariant D-modules
$$M(X) \cong \sum_S \IC(S; L_S) \cong \sum_S \IC(S; \Omega_S) \otimes \hpz(X_S)[n\dim S/2],$$
where the grading on $\hpz(X_S)$ is induced by the
(possibly infinitesimal) action of $\cs$ on $X_S$.
\end{corollary}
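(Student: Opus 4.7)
The plan is to combine Corollary \ref{c:m-dec} with Theorem \ref{weights}, with the triviality of the $K_S$ serving as the bridge between the two statements.

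First, Corollary \ref{c:m-dec} provides the canonical decomposition $M(X) \cong \bigoplus_S \IC(S;L_S)$. Since $M(X)$ carries a canonical weak $\cs$-equivariant structure and the decomposition is obtained from the canonical adjunction maps together with the rigidity of $T$ (Proposition \ref{T is rigid}), it is automatically an isomorphism of weakly $\cs$-equivariant D-modules, and each $L_S$ inherits such a structure. This establishes the first of the two isomorphisms in the corollary.

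Second, the hypothesis that $K_S$ is trivial, combined with the isomorphism $L_S \cong K_S$ from Corollary \ref{c:m-dec}, implies that $L_S$ is trivial as an $\cO$-coherent right D-module on $S$. For a trivial weakly $\cs$-equivariant D-module on a smooth $\cs$-variety, the space of flat sections is a finite-dimensional $\cs$-representation and hence decomposes into weight spaces; choosing such a decomposition yields an isomorphism $L_S \cong \Omega_S \otimes L_{S,s}$ of weakly $\cs$-equivariant D-modules, with $L_{S,s}$ endowed with the corresponding weight grading. Applying $\IC$ then gives $\IC(S;L_S) \cong \IC(S;\Omega_S) \otimes L_{S,s}$ equivariantly. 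Finally, Theorem \ref{weights} identifies $L_{S,s} \cong \hpz(X_S)[n\dim S/2]$ as graded vector spaces, and substituting and summing over $S$ yields the second isomorphism.

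The only subtlety requiring attention is verifying that the splitting $L_S \cong \Omega_S \otimes L_{S,s}$ respects the weak $\cs$-equivariant structure when $S$ does not contain a $\cs$-fixed point. This follows from the standard interpretation of a weak $\cs$-equivariant structure on an $\cO$-coherent D-module as a grading compatible with the connection, under which trivial D-modules decompose canonically as $\Omega_S$ tensored with a graded multiplicity space isomorphic to the fiber at any chosen basepoint. With this in place, the corollary is a direct consequence of Corollary \ref{c:m-dec} and Theorem \ref{weights}.
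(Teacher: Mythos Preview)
Your proposal is correct and follows the approach the paper intends: the paper presents Corollary \ref{Mweights} as an immediate consequence of Theorem \ref{weights} together with Corollary \ref{c:m-dec}, and your argument spells out precisely those details, including the semisimplicity of the Euler endomorphism on $L_{S,s}$ (stated just before Theorem \ref{weights}) that makes the equivariant splitting $L_S \cong \Omega_S \otimes L_{S,s}$ work.
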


Define $P_X(x,y)$ to be the Poincar\'e polynomial of $\hp_*(X)$, where
$x$ records homological degree and $y$ records weights for the $\cs$-action.  
Note that weights can be both positive and negative, so
$P_X(x,y)$ is a polynomial in $x$, $y$, and $y^{-1}$.  For each leaf
$S$, let $Q_{\bar S}(x)$ be the intersection cohomology
Poincar\'e polynomial of $\bar S$, that is, $Q_{\bar S}(x) := \sum \dim\IH^k(\bar S; \C)\, x^k$.
\begin{corollary}\label{pp}
If the local systems $\{K_S\}$ are trivial, then
$$P_X(x,y) \; =\; \sum_S \; x^{\dim S}\; y^{-n\dim
    S/2}\; Q_{\bar S}(x^{-1})\; P_{X_S}(0, y).$$
\end{corollary}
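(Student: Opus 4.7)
The proof will be essentially a bookkeeping exercise, extracting $P_X(x,y)$ from the explicit description of $M(X)$ supplied by Corollary \ref{Mweights}, and the only real work is keeping track of the conventions for homological and weight shifts.

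The starting point is the isomorphism of weakly $\cs$-equivariant D-modules
$$M(X)\;\cong\;\bigoplus_S \IC(S;\Omega_S)\otimes \hpz(X_S)[n\dim S/2]$$
of Corollary \ref{Mweights}, which holds because the $K_S$ are trivial local systems. I would apply the functor $\H^{-k}\pi_*$, where $\pi\colon X\to \mathrm{pt}$, to both sides and read off the bigraded dimensions. Since the tensor factor $\hpz(X_S)[n\dim S/2]$ is a graded vector space carrying no D-module structure (only a grading), it commutes with $\pi_*$, reducing the problem to understanding $\H^{-k}\pi_*\IC(S;\Omega_S)$ as a graded vector space with its residual $\cs$-structure.

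The first key identification is that, for each leaf $S$,
$$\H^{-k}\pi_*\IC(S;\Omega_S)\;\cong\;\IH^{\dim S-k}(\bar S;\bC),$$
by the standard translation between right D-modules of top forms and intersection cohomology under the de Rham/Riemann--Hilbert correspondence. Summing $x^k$ against the dimensions yields the generating function $x^{\dim S}Q_{\bar S}(x^{-1})$ for the intersection-cohomology contribution, after the reindexing $j=\dim S-k$. Since the local system $\Omega_S$ is trivially $\cs$-equivariant away from the grading shifts built into the normalization of the IC extension, this factor contributes only in weight $0$ in the $y$-variable.

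The second key identification is the weight-grading contribution. By Theorem \ref{weights}, the canonical isomorphism $L_{S,s}\cong \hpz(X_S)[n\dim S/2]$ is an isomorphism of graded vector spaces, so the $\cs$-weights on the tensor factor $\hpz(X_S)[n\dim S/2]$ are precisely those of $\hpz(X_S)$ shifted by $-n\dim S/2$. The generating polynomial in $y$ for $\hpz(X_S)$ is by definition $P_{X_S}(0,y)$ (the evaluation at $x=0$ isolates degree-zero Poisson--de Rham homology), so the weight contribution of the $S$-summand is $y^{-n\dim S/2}P_{X_S}(0,y)$.

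Multiplying the two contributions and summing over leaves $S$ gives
$$P_X(x,y)\;=\;\sum_S x^{\dim S}\,y^{-n\dim S/2}\,Q_{\bar S}(x^{-1})\,P_{X_S}(0,y),$$
which is the desired formula. The only point that requires care is the sign convention for the shift $[n\dim S/2]$ and the consistency with the shift appearing in Theorem \ref{weights}; I do not expect any genuine obstacle beyond this, since the hard structural work has already been done in Corollary \ref{Mweights} and Theorem \ref{weights}.
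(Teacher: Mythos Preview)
Your proposal is correct and follows essentially the same approach as the paper: invoke Corollary \ref{Mweights}, push forward to a point, use the identification $\H^{-k}\pi_*\IC(S;\Omega_S)\cong\IH^{\dim S-k}(\bar S;\bC)$, and collect the bigraded dimensions. The paper's proof is a one-line citation of Corollary \ref{Mweights} together with this IC identification; you have simply written out the bookkeeping in full.
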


\begin{proof}
Let $\pi$ be the map from $X$ to a point.  Then the corollary follows from
Corollary \ref{Mweights} and the fact that $\H^{-k}\!\left(\pi_* \IC(S; \Omega_S)\right) \cong \IH^{\dim S - k}(\bar S; \bC)$. 
\end{proof}

\vspace{-\baselineskip}
\begin{remark}\label{duality}
  The first author has conjectured that, if $n=2$ and $X^!$ is {\bf
  symplectic dual} to $X$ in the sense of \cite[10.15]{BLPWgco},
  then $P_X(0, y) = Q_{X^!}(y)$ \cite[3.4]{Pr12}.  Thus, if each $K_S$ is trivial, each slice
  $X_S$ has a symplectic dual, and the aforementioned conjecture holds, then
  Corollary \ref{pp} allows us to express $P_X(x,y)$ entirely in terms
  of intersection cohomology Poincar\'e polynomials.\footnote{As explained in \cite{Pr12},
  the full version of the conjecture \cite[3.4]{Pr12} applied to a slice $X_S$ with a symplectic
  dual would imply that $\dim\hpz(X_S) = \rk K_S$, thus the hypothesis of Theorem \ref{main}
 would be satisfied.} 

In the next two sections, we will apply this result to compute
$P_X(x,y)$ when $X$ is a unimodular hypertoric cone or a type A
S3-variety.  In both of these cases, the leaf closures, the slices,
and their symplectic duals are varieties of the same type \cite[10.4,
10.8, 10.16, 10.18, and 10.19]{BLPWgco}; the local systems are
trivial; and we know how to compute their intersection cohomology
Poincar\'e polynomials.  In the former case we obtain $h$-polynomials
of the broken circuit complexes of matroids, and in the latter case we
obtain Kostka polynomials.  The conjecture about symplectic duals is
proved for hypertoric varieties but not for S3-varieties, thus the
computations in Section \ref{sec:kostka} are conditional on this
unproved statement.  
\end{remark}

To prove Theorem \ref{weights}, we need the following result.  Let
$U_s$ be a contractible open neighborhood of $s$ in the analytic
topology.

\begin{proposition}\label{ham}
Every analytic Poisson vector field on $U_s$ is Hamiltonian, and so is any algebraic Poisson vector field on the formal completion $\hat X_s$.\end{proposition}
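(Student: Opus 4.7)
The plan is to lift $\theta$ to a closed $1$-form on the symplectic resolution $\rho: \tX \to X$, use the topology of the central fiber to produce a primitive, and descend that primitive back to a function on $X$ (resp.\ $\hat X_s$). Concretely, on the smooth symplectic locus $X^{\reg}$, where $\rho$ is an isomorphism, the contraction $\alpha := \iota_\theta \omega$ is a closed $1$-form since $L_\theta\omega = d\iota_\theta\omega = 0$. Viewed as a reflexive $1$-form on $X$, it extends to a regular closed $1$-form $\tilde\alpha$ on $\tX$ by the standard extension theorem for differentials on varieties with symplectic (in particular canonical) singularities; restricting to $\tilde U_s := \rho^{-1}(U_s)$ in the analytic case, or to the formal completion of $\tX$ along $\rho^{-1}(s)$ in the formal case, yields a closed $1$-form in the appropriate setting.

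Next I would show $\tilde\alpha$ is exact. Analytically, $U_s$ contractible and $\rho$ proper imply that $\tilde U_s$ deformation retracts onto the fiber $\rho^{-1}(s)$; in the formal case, the algebraic de Rham cohomology of the formal completion coincides with the singular cohomology of $\rho^{-1}(s)$. By \cite[1.9]{Kal09}, $\H^*(\rho^{-1}(s); \bC)$ is concentrated in even degrees, so $\H^1 = 0$, and we may write $\tilde\alpha = d\tilde f$ for some analytic (resp.\ formal algebraic) function $\tilde f$.

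Finally, since $X$ is normal and $\rho$ is proper and birational with connected fibers, $\rho_*\cO_{\tX} \cong \cO_X$ both as sheaves and after restricting to $U_s$ or completing at $s$; hence $\tilde f = \rho^* f$ for some $f$ on $U_s$ (resp.\ $\hat X_s$), and the identity $d\tilde f = \iota_\theta \omega$ on $X^{\reg} \cap U_s$ reads $\theta = H_f$ there. Since $X^{\reg}\cap U_s$ is dense and the underlying spaces are reduced, $\theta$ and $H_f$ are derivations agreeing on a dense open, so equality extends to all of $U_s$ (resp.\ $\hat X_s$). The main obstacle is the reflexive-forms extension in the first paragraph: the exceptional locus of $\rho$ can have codimension one in $\tX$, so regularity of $\tilde\alpha$ across it is not automatic and relies crucially on $X$ having canonical (indeed symplectic) singularities.
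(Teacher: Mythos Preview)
Your strategy is sound and genuinely different from the paper's. The paper never lifts to $\tX$; instead it works on the regular locus throughout. For the analytic statement it proves $H^1(U_s^{\reg};\C)=0$ directly: starting from $H^1(\rho^{-1}(U_s);\C)=0$ (\cite[2.12]{KalPPV}), it analyzes the surjection $\pi_1(\rho^{-1}(U_s^{\reg}))\twoheadrightarrow\pi_1(\rho^{-1}(U_s))$ and shows its kernel is generated by finite-order loops coming from the Kleinian singularities along codimension-two leaves, hence dies in $H_1(-;\C)$. For the formal statement it runs a Mayer--Vietoris comparison together with Hartshorne's formal de Rham theorem, then a short spectral-sequence step to pass from $\bH^1_{DR}$ to $H^1$ of global sections of $\Omega^\bullet$.

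Your route trades this hands-on topology for the reflexive-differentials extension theorem: once $\tilde\alpha$ exists on $\tilde U_s$ (resp.\ on $\hat{\tX}_{\rho^{-1}(s)}$), exactness follows immediately from $H^1(\rho^{-1}(s);\C)=0$ and the primitive descends by $\rho_*\cO_{\tX}=\cO_X$. This is cleaner, but two points deserve a sentence each. First, the extension input you invoke (Greb--Kebekus--Kov\'acs--Peternell, valid since symplectic singularities are klt) is stated algebraically; for the analytic claim you should note that reflexivity of $\rho_*\Omega^1_{\tX}$ passes to the analytification via relative GAGA for the proper map $\rho$, and for the formal claim that it survives completion at $s$ by flat base change plus the theorem on formal functions. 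Second, in the formal case you need one more line to go from $\bH^1_{DR}(\hat{\tX}_{\rho^{-1}(s)})=0$ to the vanishing of $H^1(\Gamma(\Omega^\bullet))$ (e.g.\ the $E_2^{1,0}$ term of the Hodge--de Rham spectral sequence already stabilizes); the paper makes exactly this move on $\hat X_s^{\reg}$. With those remarks your argument is complete.
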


\begin{proof}
  By normality of $X$, it is enough to prove each statement on the
  regular locus.  On a smooth symplectic manifold, Poisson vector
  fields correspond to closed one-forms and Hamiltonian vector fields
  correspond to exact one-forms. Thus, we need to show that the global
  sections of the de Rham complex has vanishing first cohomology on
  the regular locus.  For the analytic statement, it suffices to show
  that the topological cohomology of $U_s^{\reg}$ vanishes, since in
  this case every closed one-form is the differential of a smooth
  function, and if the one-form is analytic, the same must be true of
  the function.

We begin by observing that $U_s^{\reg} \cong \rho^{-1}(U_s^{\reg})$.  By \cite[2.12]{KalPPV},
  $\H^1(\rho^{-1}(U_s); \C)=0$, so we need to show that passing to the preimage of the regular
  locus does not introduce any cohomology in degree 1.
Since $U_s \setminus U_s^{\reg}$ has complex codimension at least one, hence
  real codimension at least two, every loop in $U_s$ can be homotoped
  to $U_s^{\reg}$, so the map $\pi_1(U_s^{\reg}) \to \pi_1(U_s)$ is
  surjective.  Consider the stratification
 \[
\rho^{-1}(U_s \setminus U_s^{\reg}) = \bigsqcup_{S'} \rho^{-1}(U_s \cap S'),
\]
where $S'$ ranges over all symplectic leaves of $X$ whose closure
contains $S$ other than the open leaf.  Suppose $S'$ is such a leaf.
By the semismallness property \cite[2.11]{KalPPV}, the codimension of
$\rho^{-1}(U_s \cap S')$ is at least half the codimension of $S'$. If
the codimension of $S'$ is at least four, then $\rho^{-1}(U_s \cap
S')$ therefore has codimension at least two, hence real codimension
four.  The fundamental group of a smooth manifold is unchanged by
removing a locus of real codimension greater than two (since
homotopies of loops can be pushed off this locus).  Therefore the
fundamental group of $U_s$ is unchanged by removing the union of
$\rho^{-1}(U_s \cap S')$ over all leaves $S'$ of codimension at least
four.  Next, if $S'$ has codimension two, then the singularity at $s'
\in S'$ is of Kleinian type, and hence in a small enough neighborhood
$U_{s'}$ of $s'$, the fundamental group $\pi_1(\rho^{-1}(U_{s'}))$ is
a finite subgroup of $\operatorname{SL}_2(\bC)$.  Therefore, the
kernel of the map $\pi_1(\rho^{-1}(U_s \setminus S')) \to
\pi_1(\rho^{-1}(U_s))$ is generated by this finite subgroup of
$\operatorname{SL}_2(\bC)$.  We conclude that the
surjection $\pi_1(\rho^{-1}(U_s^{\reg})) \to \pi_1(\rho^{-1}(U_s))$ is
generated by elements of finite order, and hence this surjection
descends to an isomorphism on homology $$0 = \H_1(\rho^{-1}(U_s^{\reg}),\bC)
\overset{\cong}{\longrightarrow} \H_1(\rho^{-1}(U_s), \bC).$$  Dualizing, we obtain the desired result.

For the statement about the formal completion, we follow \cite[\S 4.4]{ES-dmlv}. Let $V := U \setminus
U_s^{\reg}$ be the singular locus. By Hartshorne's theorem
\cite{Har-adrc,Har-drcav}, the de Rham hypercohomology of the formal
completion $\hat {\tX}_{\rho^{-1}(s)}$ equals the topological
cohomology of the fiber $\rho^{-1}(s)$, which also equals the
topological cohomology of $\rho^{-1}(U_s)$.  Then, as in
\cite[(4.40)]{ES-dmlv}, the Mayer-Vietoris sequences for the triples
$(\tX, \tX \setminus \rho^{-1}(V), \hat {\tX}_{\rho^{-1}(s)})$ and
$(\tX, \tX \setminus \rho^{-1}(V), \rho^{-1}(U_s))$ are
isomorphic. Since the intersections of the second two open subsets of
$\tX$ are $\rho^{-1}(\hat X_s^{\reg}) \cong \hat X_s^{\reg}$ and
$\rho^{-1}(U_s^{\reg}) \cong U_s^{\reg}$, respectively, we may take hypercohomology of the de Rham complex to conclude
that  $\bH^1_{DR}(\hat X_s^{\reg}) \cong \bH^1_{DR}(U_s^\reg)$.  The
latter, by Grothendieck's theorem, is equal to the topological
cohomology, which
we showed is zero.  

To conclude, we need to compare the hypercohomology of the de Rham
complex with the cohomology of global sections of the de Rham complex.
The spectral sequence computing hypercohomology degenerates in degree
one on the second page, yielding an isomorphism $\bH^1_{DR}(\hat
X_s^\reg) \cong H^1 (\Gamma(\Omega^\bullet_{\hat X_s^\reg})) \oplus
(R^1 \Gamma)(\cO_{\hat X_s^\reg})$.  Hence both summands are zero.
\end{proof}

\begin{proofweights}
  \excise{ \nicktodo{If $X_S$ is the completion of a variety with an
      honest $\cs$-action, then Namikawa's result can be use to show
      that the profinite completion of the smooth locus of $X_S$ is
      finite, which should imply the desired result.  } 
  Let $G(X)$ denote the set of vector fields $\theta$ on $X$ such that
  $L_\theta \pi = c \pi$ for some $c \in \bC$, and let $\nu: G(X) \to
  \bC$ be the map $\theta \mapsto c$.  Make the same definitions for
  $U_s$ and $\hat X_s$.  Then the maps $G(U_s) \to \End(M(U_s))$ and
  $G(\hat X_s) \to \End(M(\hat X_s))$ factor through $\nu$ by the
  above.  } Passing to a formal neighborhood of $s$, we obtain the
Darboux-Weinstein decomposition $\hat X_s \cong \hat S_s \hat \times
X_S$.  Then we can view $\xi$ as a vector field on $\hat X_s$ parallel
to the $X_S$ factor everywhere, and set $\xi' := \Eu_{\hat S_s} \!\!+
\,\xi$.  Letting $\pi$ be the Poisson bivector on $X$ and hence on
$\hat X_s$, we have $L_{\xi'} \pi = -n \pi = L_{\Eu} \pi$.  This
implies that $\xi'-\Eu$ is Poisson, and therefore Hamiltonian by
Proposition \ref{ham}.  Thus the endomorphisms of $M(\hat X_s)\cong
M(\hat S_s)\boxtimes M(X_S)$ induced by $\Eu$ and $\xi'$ are equal.
  
The endomorphism induced by $\Eu$ is responsible for the grading on $L_{S,s}$, and the endomorphism of $M(X_S)$
induced by $\xi$ is responsible for the grading on $\hpz(X_S)$.  To prove the theorem,
we need to show that the endomorphism of $M(\hat S_s)$ induced by $\Eu_{\hat S_s}$ is multiplication by
$-n\dim S/2$.
  To see this last fact, note that $M(\hat S_s) \cong \Omega_{\hat
    S_s}$ via the map that
  sends the canonical generator to $\omega^{\dim S/2}$, where $\omega$ is the symplectic form on $\hat S_s$.
  Since the Lie derivative map from vector fields to differential operators is an antihomomorphism,
  we have 
  $$\Eu_{\hat S_s}\cdot\; \omega^{\dim S/2} = -L_{\Eu_{\hat S_s}}\omega^{\dim S/2}
  = -(n\dim S/2)\,\omega^{\dim S/2},$$
by our assumption that the Poisson bracket on $X$, and hence on $S$, has
weight $-n$. 
 \end{proofweights}

\section{The hypertoric case}
In this section we compute the polynomial $P_{\XA}(x,y)$ for a
coloop-free, unimodular, rational, central hyperplane arrangement
$\cA$ with $\ell$ hyperplanes.  We use the action of $\cs$ described
in \cite[\S 2]{Pr12}, for which the symplectic form on the resolution
has weight $n=2$.  

Denham \cite[\S 3]{Denham} defines a polynomial
$\Phi_{\!\cA}(x,y,b_1,\ldots,b_\ell)$ whose coefficients are the
dimensions of certain eigenspaces (determined by the $b$ exponents) of
``combinatorial Laplacian" operators on certain vector spaces
(determined by the $x$ and $y$ exponents).  We will identify all of
the $b$ variables to obtain a 3-variable polynomial
$\Phi_{\!\cA}(x,y,b)$.  This is an enrichment of the Tutte polynomial in the sense
that $\Phi_{\!\cA}(x-1,y-1,1) = T_{\cA}(x,y)$ \cite[23(2)]{Denham}.

\begin{theorem}\label{laplacian} $\;P_{\XA}(x,y) \; = \;
  y^{-2\rk\cA}\;\Phi_{\!\cA}(x^2+1, y^{-2}+1, y^2).$
\end{theorem}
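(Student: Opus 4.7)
The plan is to derive the formula by applying Corollary~\ref{pp} in the hypertoric setting and then identifying the resulting sum over leaves with Denham's polynomial via its decomposition along the lattice of flats. Example~\ref{first hypertoric} confirms that Theorem~\ref{main} (and hence Corollary~\ref{pp}) is applicable to $\XA$, and in the unimodular setting one knows that the local systems $K_S$ on the leaves of $\XA$ are trivial, since the hypertoric resolution is both semismall and conical and each leaf has simply connected normal slice. Recall also that $n=2$ for hypertoric varieties, as noted at the start of the section.

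First I would unpack Corollary~\ref{pp} combinatorially. By Example~\ref{first hypertoric}, the symplectic leaves $S_F$ are indexed by coloop-free flats $F$ of $\cA$, with $\dim S_F=2(\rk\cA-\rk F)$, formal slice $X_{S_F}\cong X(\cA_F)$, and leaf closure $\bar S_F$ the hypertoric variety $X(\cA^F)$ associated with the restriction $\cA^F$ of $\cA$ to $F$. Substituting these ingredients, Corollary~\ref{pp} gives
$$P_{\XA}(x,y)=\sum_{F}x^{2(\rk\cA-\rk F)}\,y^{-2(\rk\cA-\rk F)}\,Q_{X(\cA^F)}(x^{-1})\,P_{X(\cA_F)}(0,y),$$
where the sum is over coloop-free flats $F$ of $\cA$.

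Second, I would replace the two families of polynomials on the right-hand side by explicit matroid invariants. For $Q_{X(\cA^F)}(x)$, I invoke the known description of the intersection cohomology of a unimodular hypertoric variety as the $h$-polynomial of the broken circuit complex of the underlying matroid. For $P_{X(\cA_F)}(0,y)$, I invoke Remark~\ref{duality}: since the symplectic dual of a unimodular hypertoric variety is the hypertoric variety of the Gale dual arrangement, and the duality conjecture of \cite{Pr12} is established in the hypertoric case, we have $P_{X(\cA_F)}(0,y)=Q_{X(\cA_F^\vee)}(y)$, which is again the $h$-polynomial of the broken circuit complex of a matroid (this time of the Gale dual). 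After this substitution, $P_{\XA}(x,y)$ is expressed as a sum over coloop-free flats of products of broken circuit $h$-polynomials, times a monomial in $x$ and $y$ recording rank data.

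Third, and most substantively, I would match this expression with $y^{-2\rk\cA}\Phi_{\!\cA}(x^2+1,y^{-2}+1,y^2)$ by using a lattice-of-flats decomposition of Denham's polynomial. Denham's combinatorial Laplacian respects the decomposition of the Orlik--Solomon algebra (and its Aomoto-type deformation) along flats, and this produces a formula expressing $\Phi_{\!\cA}(x,y,b)$ as a sum over flats $F$ of a product of contributions from $\cA^F$ and $\cA_F$ (or equivalently $\cA_F^\vee$), weighted by a power of $b$ recording $\rk F$. Under the substitution $x\mapsto x^2+1$, $y\mapsto y^{-2}+1$, $b\mapsto y^2$, each such factor is known to specialize to the broken circuit $h$-polynomial entering the sum from the previous step, and the overall $y^{-2\rk\cA}$ absorbs the rank-dependent normalization. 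Verifying that the two sums agree term by term, indexed by coloop-free flats, will complete the proof.

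The main obstacle is the third step: translating Denham's definition of $\Phi_{\!\cA}$ via the combinatorial Laplacian into the required lattice-of-flats sum, and checking that the specialization $(x^2+1,y^{-2}+1,y^2)$ turns his local factors into precisely the broken circuit $h$-polynomials of $\cA^F$ and $\cA_F^\vee$. This is a purely combinatorial verification, but it requires a careful reading of \cite{Denham} to extract the correct decomposition and to keep track of the Gale duality switch that interchanges the roles of $\cA^F$ and $\cA_F^\vee$ inside the specialization; once that is in place the matching with Corollary~\ref{pp} is automatic.
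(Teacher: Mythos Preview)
Your approach is essentially identical to the paper's: apply Corollary~\ref{pp} using the leaf/slice/closure identifications from Example~\ref{first hypertoric}, rewrite $Q_{X(\cA^F)}$ and $P_{X(\cA_F)}(0,y)$ in terms of matroid invariants (the paper works with Tutte polynomials rather than broken circuit $h$-polynomials, but these are interchangeable via $h_\cA^{\mathrm{br}}(x^2)=x^{2\rk\cA}T_\cA(x^{-2},0)$), and then match against Denham's flat decomposition of $\Phi_{\!\cA}$.

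One small inaccuracy to fix: in the flat decomposition of $\Phi_{\!\cA}(x,y,b)$, the $b$-weight is $b^{|F|}$ (the number of hyperplanes in $F$), not $b^{\rk F}$. Concretely, Denham's formula yields
\[
\Phi_{\!\cA}(x,y,b)=\sum_F T_{\cA^F}(x-1,0)\,T_{\cA_F}(0,y-1)\,b^{|F|},
\]
and on the Poisson side the factor $y^{2|F|}$ arises because $P_{X(\cA_F)}(0,y)=y^{2|F|-2\rk F}T_{\cA_F}(0,y^{-2})$ combines with the $y^{-2(\rk\cA-\rk F)}$ from Corollary~\ref{pp} to give $y^{-2\rk\cA}\cdot y^{2|F|}$. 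With this correction the term-by-term matching is immediate, and the ``main obstacle'' you flag is just the citation of Denham's explicit formula rather than a nontrivial verification.
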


\begin{proof}
  As stated in Example \ref{first hypertoric}, the symplectic leaves
  of $\XA$ are indexed by coloop-free flats of $\cA$, and the leaf
  indexed by $F$ has a formal slice that is isomorphic to a formal
  neighborhood of the cone point of $X(\cA_F)$.  Furthermore, the
  closure of the leaf is isomorphic to $X(\cA^F)$, where $\cA^F$ is
  the restriction of $\cA$ to $F$ \cite[\S 2]{PW07}.

  Let $T_\cA(x,y)$ be the Tutte polynomial of $\cA$.  By \cite[4.3 and
  5.5]{PW07}, we have
$$Q_{X(\cA)}(x) = h_\cA^{\operatorname{br}}(x^2) = x^{2\rk\cA}\;T_\cA(x^{-2}, 0).$$
Applying this to the restricted arrangement $\cA^F$, we
obtain $$Q_{X(\cA^F)}(x) = x^{2\crk F}T_{\cA^F}(x^{-2}, 0),$$ where
$\crk F = \rk\cA - \rk F$.  By \cite[3.1]{Pr12}, we have
$$P_{X(\cA)}(0,y) = Q_{X(\cA^\vee)}(y) = y^{2\rk\cA^\vee}\;T_{\cA^\vee}(y^{-2}, 0)
= y^{2|\cA| - 2\rk\cA}\;T_\cA(0, y^{-2}),$$ where $\cA^\vee$ is the
Gale dual of $\cA$.  Applying this to the localized arrangement
$\cA_F$, we obtain
$$P_{X(\cA_F)}(0, y) = y^{2|F| - 2\rk F}\;T_{\cA_F}(0, y^{-2}).$$
Applying Corollary \ref{pp}, we have
\begin{eqnarray*}\label{tutte-eq}\notag
  P_{\XA}(x,y) &=& \sum_F\; x^{2\crk F} y^{-2\crk F} x^{-2\crk F}\;T_{\cA^F}(x^2, 0)\;
  y^{2|F| - 2\rk F}\; T_{\cA_F}(0, y^{-2})\\
  &=& y^{-2\rk \cA} \;\sum_F\; y^{2|F|}\;T_{\cA^F}(x^2, 0)\; T_{\cA_F}(0, y^{-2}). \label{e:tutte}
\end{eqnarray*}
Let $\chi_\cA(x) = (-1)^{\rk\cA}\;T_\cA(1-x, 0)$ be the characteristic
polynomial of $\cA$.  By the first equation in \cite[\S 3.1]{Denham},
we have
\begin{eqnarray*}
  \Phi_{\!\cA}(x,y,b) 
  &=& \sum_F\; (-1)^{\rk \cA - |F|} \;\chi_{\cA^F}(-x) \;\chi_{(\cA_F)^\vee}(-y)\; b^{|F|}\\
  &=& \sum_F\; (-1)^{\rk \cA - |F|}\; (-1)^{\rk \cA^F} (-1)^{\rk (\cA_F)^\vee} 
  T_{\cA^F} (x-1, 0)\; T_{(\cA_F)^\vee}(y-1, 0)\; b^{|F|}\\
  &=& \sum_F\; T_{\cA^F} (x-1, 0)\; T_{(\cA_F)^\vee}(y-1, 0)\; b^{|F|}\\
  &=& \sum_F\; T_{\cA^F} (x-1, 0)\; T_{\cA_F}(0, y-1)\; b^{|F|}.
\end{eqnarray*}
Thus $$y^{-2\rk\cA}\;\Phi_{\!\cA}(x^2+1, y^{-2}+1, y^2) =
y^{-2\rk\cA}\; \sum_F\; T_{\cA^F} (x^2, 0)\; T_{\cA_F}(0, y^{-2})\;
y^{2|F|} = P_{\XA}(x,y),$$ and the theorem is proved.
\end{proof}

\begin{remark}
  Specializing at $y=1$, we obtain the equation $$P_{\XA}(x,1) =
  \Phi_{\!\cA}(x^2+1, 2, 1) = T_{\cA}(x^2,1) = x^{2\rk\cA}
  h_{\cA}(x^{-2}),$$ matching the known formula for the Betti numbers
  of a conical symplectic resolution of $\XA$ given in \cite[1.2]{HS}
  and \cite[3.5 and 5.5]{PW07}.
\end{remark}

\excise{
\begin{remark}
  It is not yet clear to us whether the full polynomial
  $\Phi_{\!\cA}(x,y,b_1,\ldots,b_\ell)$ has an interpretation in terms
  of Poisson homology.
\end{remark}

\nicktodo{The hypertoric variety $\XA$ sits inside of the Lawrence
  toric variety $Y(\cA)$, which has dimension $\ell + \rk \cA$.  This
  torus acting on $Y(\cA)$ naturally splits into two pieces, $T^\ell$
  and $T^{\rk\cA}$.  The smaller piece $T^{\rk\cA}$ acts Hamiltonianly
  on $\XA$, which is presumably not so interesting from the standpoint
  of Poisson homology.  For the larger piece $T^\ell$, only the
  diagonal copy of $\cs$ acts on $\XA$, and this is exactly the action
  of $\cs$ that we are considering.  My hunch is that the full
  polynomial $\Phi_{\!\cA}(x,y,b_1,\ldots,b_\ell)$ can be understood
  by incorporating the rest of the torus $T^\ell$, but I don't know
  how to do this.  The problems are that $T^\ell$ doesn't preserve
  $\XA$ and (more importantly) the Poisson structure on $Y(\cA)$ is
  not homogeneous for the action of $T^\ell$.}
}

\section{The case of S3-varieties in type A}\label{sec:kostka}
Let $\la$ and $\mu$ be partitions of the same positive integer $r$.
Let $\cO_\la$ be the nilpotent coadjoint orbit in $\mathfrak{sl}_r^*$
whose Jordan blocks have sizes given by the parts of
$\la$;\footnote{More precisely, the elements of the image of this
  orbit under the Killing form isomorphism $\mathfrak{sl}_r^* \to
  \mathfrak{sl}_r$ have this Jordan decomposition.} then $\la\geq\mu$
in the dominance order if and only if $\cO_\mu$ is contained in the
closure of $\cO_\la$.  In this case, let $X_{\la\mu}$ be the normal
slice to $\cO_\mu$ inside of the closure of $\cO_\la$.  This space is
sometimes called an {\bf S3-variety}, after Slodowy, Spaltenstein, and
Springer \cite{WebWO, BLPWgco}.  The variety $X_{\la\mu}$ is a
Nakajima quiver variety for a finite type A quiver, and conversely any
such variety is an S3-variety \cite{Maf}; in particular, $X_{\la\mu}$
admits a projective symplectic resolution (the fibers are known as
{\bf Spaltenstein varieties}), and the local systems associated to the
symplectic leaves are trivial.  We equip these varieties with the
standard action of $\cs$ with the property that the Poisson bracket is
homogeneous of weight -2.  The symplectic leaves of $X_{\la\mu}$ are
indexed by the poset $[\mu,\la]$.  For any $\nu\in [\mu,\la]$, the
closure of the leaf $S_\nu$ is isomorphic to $X_{\nu\mu}$, and the
normal slice to $S_\nu$ is isomorphic to $X_{\la\nu}$.

Let $n_\la = \sum_i (i-1)\la_i$, so that 
$\dim X_{\la\mu} = 2(n_\mu - n_\la)$.  A theorem of Lusztig
\cite[Theorem 2]{LuGreen} says
that \begin{equation}\label{lusztig}Q_{X_{\la\mu}}(x) = x^{2(n_\mu -
    n_\la)}K_{\la\mu}(x^{-2}),\end{equation} where $K_{\la\mu}(t)$ is
the {\bf Kostka polynomial} associated to $\la$ and $\mu$.  We will
assume that the conjecture \cite[3.4]{Pr12} holds; we have
$X_{\la \mu}^! = X_{\mu^t \lambda^{\hspace{-.5pt} t}}$, so the explicit statement of the conjecture
in this case is that
\begin{equation}\label{top}
P_{X_{\la\mu}}(0,y) = Q_{X_{\mu^t\la^{\hspace{-.5pt} t}}}(y) = y^{2(n_{\la^{\hspace{-.5pt} t}} 
  - n_{\mu^t})}K_{\mu^t\la^{\hspace{-.5pt} t}}(y^{-2}).
\end{equation}
  
\begin{proposition}\label{s3-kostka}  
If Equation \eqref{top} holds for all type A S3-varieties, then 
$$P_{X_{\la\mu}}(x,y) = y^{2(n_{\la^{\hspace{-.5pt} t}}-n_\mu)} \sum_{\nu\in [\mu,\la]} y^{2(n_\nu - n_{\nu^t})} K_{\nu\mu}(x^{2}) 
  K_{\nu^t\la^{\hspace{-.5pt} t}}(y^{-2}).$$
\end{proposition}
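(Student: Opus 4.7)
The plan is to apply Corollary \ref{pp} directly, substituting the three ingredients that are already available in the setup: (i) the identification of leaves and their closures/slices for S3-varieties, (ii) Lusztig's formula \eqref{lusztig} for intersection cohomology Poincar\'e polynomials of S3-varieties in terms of Kostka polynomials, and (iii) the assumed identity \eqref{top} for the degree-zero Poisson-de Rham homology of each slice. Since we are told that the local systems $K_S$ attached to symplectic leaves of $X_{\la\mu}$ are trivial, the hypothesis of Corollary \ref{pp} is satisfied, and the standard $\cs$-action gives $n=2$.

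Concretely, the leaves of $X_{\la\mu}$ are indexed by $\nu\in[\mu,\la]$, with $\bar S_\nu\cong X_{\nu\mu}$ and normal slice isomorphic to $X_{\la\nu}$. Using $\dim X_{\la\mu}=2(n_\mu-n_\la)$, the leaf $S_\nu$ has dimension $2(n_\mu-n_\nu)$. I would then evaluate each factor in Corollary \ref{pp} in turn:
\begin{itemize}
\item $x^{\dim S_\nu}=x^{2(n_\mu-n_\nu)}$ and $y^{-n\dim S_\nu/2}=y^{-2(n_\mu-n_\nu)}$;
\item $Q_{\bar S_\nu}(x^{-1})=Q_{X_{\nu\mu}}(x^{-1})=x^{-2(n_\mu-n_\nu)}K_{\nu\mu}(x^2)$ by \eqref{lusztig} with $\la\leftrightarrow\nu$;
\item $P_{X_{\la\nu}}(0,y)=y^{2(n_{\la^{\hspace{-.5pt} t}}-n_{\nu^t})}K_{\nu^t\la^{\hspace{-.5pt} t}}(y^{-2})$ by the assumed equation \eqref{top} with $\mu\leftrightarrow\nu$.
\end{itemize}

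Assembling these, the $x$-factors cancel cleanly, leaving $K_{\nu\mu}(x^2)$ inside the sum. The $y$-factors combine to $y^{2(n_\nu-n_\mu+n_{\la^{\hspace{-.5pt} t}}-n_{\nu^t})}$, which I would rewrite as $y^{2(n_{\la^{\hspace{-.5pt} t}}-n_\mu)}\cdot y^{2(n_\nu-n_{\nu^t})}$; pulling the $\nu$-independent prefactor out of the sum yields precisely the desired formula.

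There is no real obstacle here: the proof is entirely mechanical once Corollary \ref{pp} is in hand and the two input formulas \eqref{lusztig} and \eqref{top} have been named. The only thing to be careful about is bookkeeping of exponents, in particular correctly tracking which of $\la,\mu,\nu$ appears in each Kostka index and making sure the transpose appears on the correct side in \eqref{top} when specialized to $X_{\la\nu}$ (so that $\nu^t$, not $\nu$, indexes the Kostka polynomial multiplying $K_{\nu\mu}$). Once these are handled, the identity follows by inspection.
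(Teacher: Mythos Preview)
Your approach is exactly the paper's: plug the leaf/slice identifications and the formulas \eqref{lusztig} and \eqref{top} into Corollary \ref{pp} and simplify. The arithmetic is correct.

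There is one small but genuine omission. Corollary \ref{pp} lives in a section whose standing assumption is the hypothesis of Theorem \ref{main}, namely $\dim\hpz(X_S)=\rk K_S$ for every leaf $S$; triviality of the $K_S$ is an \emph{additional} hypothesis, not a substitute for that one. The paper's proof first checks the standing assumption: specializing \eqref{top} at $y=1$ gives $\dim\hpz(X_{\la\nu})=K_{\nu^t\la^{\hspace{-.5pt} t}}(1)$, and this Kostka number equals $\rk K_{S_\nu}$ by \cite[3.5(b)]{BorMac}. You should include this step before invoking Corollary \ref{pp}.
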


\begin{proof}
For all $\nu\in [\mu,\la]$, Equation \eqref{top} tells us that $$\dim \hpz(X_{\la\nu}) =
P_{X_{\la\nu}}(0,1) = K_{\nu^t\la^{\hspace{-.5pt} t}}(1),$$ which is
in turn equal to the rank of the local system $K_{S_\nu}$
\cite[3.5(b)]{BorMac}, thus the hypothesis of Theorem \ref{main} is
satisfied.  Then by Corollary \ref{pp}, we have
\begin{eqnarray*}\label{kostka-eq}\notag
  P_{X_{\la\mu}}(x,y) &=& \sum_{\nu\in [\mu,\la]} x^{2(n_\mu - n_\nu)} y^{-2(n_\mu - n_\nu)} 
  x^{2(n_\nu - n_\mu)}K_{\nu\mu}(x^2)
  y^{2(n_{\la^{\hspace{-.5pt} t}} - n_{\nu^t})}K_{\nu^t\la^{\hspace{-.5pt} t}}(y^{-2})\\
  &=& y^{2(n_{\la^{\hspace{-.5pt} t}}-n_\mu)} \sum_{\nu\in [\mu,\la]} y^{2(n_\nu - n_{\nu^t})} K_{\nu\mu}(x^{2}) 
  K_{\nu^t\la^{\hspace{-.5pt} t}}(y^{-2}). 
\end{eqnarray*}
This completes the proof.
\end{proof}


\section{The case of the nilpotent cone in general
  type}\label{sec:general type}
Let $\mathfrak{g}$ be any semisimple Lie algebra, and let $X \subseteq
\mathfrak{g}^*$ be the nilpotent cone.  As before, one can consider
coadjoint orbits in $X$ and slices to one inside the closure of
another; however, these do not admit symplectic resolutions in
general, and even when they do, the assumptions of Theorem \ref{main}
are not known to be satisfied.  Here we consider only the case of $X$
itself, where Theorem \ref{main} is known to hold for the Springer
resolution $T^*\cB \to X$ \cite{ES10}, where $\cB$ is the flag
variety.\footnote{In fact, in \cite{ES10} the conclusion $M \cong T$ of
  Theorem \ref{main} is proved first, and then the hypothesis
  follows.} 
If $\mathfrak{g}$ is not of type A, then the hypothesis of Corollary
\ref{pp} fails, so we have no direct way of using that result to
compute $P_X(x,y)$.  However, we will conjecture a formula for
$P_X(x,y)$ based on the type A case and a suggestion of G.~Lusztig and
P.~Etingof.

\subsection{Generalized Kostka polynomials}
Let $W$ be the Weyl group of $\mathfrak{g}$.  Springer theory tells us
that $T$ is equipped with an action of $W$, and that for every
irreducible representation $\chi$ of $W$, we may associate a nilpotent
coadjoint orbit $\cO_{\mathfrak{g},\chi}$ and an irreducible local
system $M_{\mathfrak{g},\chi}$ on $\cO_{\mathfrak{g},\chi}$ such that
\begin{equation}\label{Springer}M\cong T \cong
  \bigoplus_{\mathfrak{g},\chi}
  \IC(\cO_{{\mathfrak{g},\chi}};M_{\mathfrak{g},\chi}) \otimes \chi
\end{equation} as a $W$-equivariant D-module.
By pushing forward to a point and taking cohomology, we obtain an
action of $W$ on $\H^*(T^*\cB; \C) = H^*(\cB; \C)$ which is isomorphic
(after forgetting the grading) to the regular representation.

For each $\chi$ of $W$, let
$$K_{\mathfrak{g},\chi}(t) := \sum_{i \geq 0} t^i\dim
\Hom_W\!\big(\chi, \H^{2\dim\cB - 2i}(\cB; \C)\big).$$ We call
$K_{\mathfrak{g},\chi}(t)$ a {\bf generalized Kostka polynomial},
motivated by the following well-known proposition.

\begin{proposition}\label{typeA}
  For any $\mathfrak{g}$ and any representation $\chi$ of $W$, we have
$$K_{\mathfrak{g},\chi}(t^2) = 
\sum_{i\geq
  0}t^i\dim\IH^{\dim\cO_{\mathfrak{g},\chi}-i}(\bar\cO_{\mathfrak{g},\chi};
M_{\mathfrak{g},\chi}).$$ If
$\mathfrak{g}=\mathfrak{sl}_r$, $\chi$ is an irreducible
representation of $S_r$, and $\nu$ is the partition of $r$ with the
property that $\cO_{\mathfrak{g},\chi} = \cO_\nu$, then
$K_{\mathfrak{g},\chi}(t) = K_{\nu(1^r)}(t)$.
\end{proposition}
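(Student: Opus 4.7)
The plan is to derive both identities by pushing the Springer decomposition \eqref{Springer} forward to a point and then, in the type A case, invoking a classical formula expressing Kostka polynomials as graded multiplicities in the cohomology of Springer fibers.

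For the first identity, I push \eqref{Springer} along $\pi: X \to \mathrm{pt}$. On one side, $\pi_* T = \pi_* \rho_* \Omega_{T^*\cB}$ computes $\H^{\dim X - *}(T^*\cB; \C)$, and since $T^*\cB$ is a vector bundle over $\cB$ this equals $\H^{2\dim\cB - *}(\cB; \C)$. On the other side, pushing each summand of \eqref{Springer} forward and taking the $\chi$-isotypic component of the result (using that $\pi_*\IC(\cO_{\mathfrak{g},\chi}; M_{\mathfrak{g},\chi})$ computes shifted $\IH^*$ of $\bar\cO_{\mathfrak{g},\chi}$ with coefficients in $M_{\mathfrak{g},\chi}$) yields, degree by degree,
$$\dim\IH^{\dim\cO_{\mathfrak{g},\chi} - k}(\bar\cO_{\mathfrak{g},\chi}; M_{\mathfrak{g},\chi}) \;=\; \dim\Hom_W\!\big(\chi, \H^{2\dim\cB - k}(\cB; \C)\big).$$
Since $\cB$ has only even cohomology (Schubert decomposition, or \cite[1.9]{Kal09} applied to the fiber over $0$), both sides vanish for odd $k$. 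Packaging as a generating function in $t$ matches $\sum_k t^k \dim\IH^{\dim\cO_{\mathfrak{g},\chi} - k}(\bar\cO_{\mathfrak{g},\chi}; M_{\mathfrak{g},\chi})$ with $\sum_i t^{2i} \dim\Hom_W(\chi, \H^{2\dim\cB - 2i}(\cB; \C)) = K_{\mathfrak{g},\chi}(t^2)$, proving the first identity.

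For the second identity, specialize to $\mathfrak{g} = \mathfrak{sl}_r$ and $\chi = \chi_\nu$. Then $K_{\mathfrak{g},\chi}(t)$ becomes the graded $\chi_\nu$-isotypic multiplicity in $\H^*(\cB; \C)$, indexed from the top. Identifying this with the classical Kostka-Foulkes polynomial $K_{\nu(1^r)}(t)$ is a standard fact: it is the $\mu=(1^r)$ case of the Hotta--Springer (equivalently, Lusztig--Green) formula
$$K_{\nu\mu}(t) \;=\; \sum_i t^i\dim\Hom_{S_r}\!\big(\chi_\nu, \H^{2i}(\cB_{x_\mu}; \C)\big),$$
for an appropriate normalization of the Springer correspondence, since for $\mu=(1^r)$ one has $x_\mu = 0$ and $\cB_{x_\mu} = \cB$. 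Passing from the top-indexed convention of our definition to the bottom-indexed convention of the formula above amounts to Poincar\'e duality on $\cB$, which twists the Springer $S_r$-action by the sign representation (equivalently, transposes partitions); this twist is absorbed by the standard symmetries of Kostka-Foulkes polynomials.

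The only real obstacle is convention-checking in the second part. The essential mathematical content is a direct consequence of \eqref{Springer}, the decomposition theorem, and the Hotta--Springer theorem for the Springer fiber over $0$; the risk of error lies entirely in aligning the chosen normalizations of the Springer correspondence and of $K_{\nu(1^r)}(t)$ with those adopted by the authors.
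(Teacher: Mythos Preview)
Your argument for the first identity is exactly what the paper does: push \eqref{Springer} forward to a point and read off the $\chi$-isotypic component.

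For the second identity, your route differs from the paper's. The paper does not go back to the definition of $K_{\mathfrak{g},\chi}$ and invoke the Hotta--Springer multiplicity formula; instead it uses the first identity together with Lusztig's theorem \eqref{lusztig}, specialized to $\lambda = \nu$ and $\mu = (1^r)$. Since in that case $X_{\nu(1^r)} = \bar\cO_\nu$ and, in type $A$, all the Springer local systems $M_{\mathfrak{g},\chi}$ are trivial, the right-hand side of the first identity is exactly the intersection cohomology Poincar\'e polynomial of $\bar\cO_\nu$ reindexed from the top, and \eqref{lusztig} immediately identifies this with $K_{\nu(1^r)}(t^2)$. This is shorter and avoids the normalization gymnastics you flag at the end: there is no need to pass through Poincar\'e duality, the sign twist, or symmetries of Kostka--Foulkes polynomials, because everything is already expressed in terms of $\IH^*$ of orbit closures, which is precisely what \eqref{lusztig} computes. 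Your approach is correct in spirit, but the convention-matching you defer (``the risk of error lies entirely in aligning the chosen normalizations'') is real work that the paper's route sidesteps entirely by staying on the intersection-cohomology side.
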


\begin{proof}
  The first statement follows immediately from pushing Equation
  \eqref{Springer} forward to a point and taking cohomology.  To
  obtain the second statement from the first, we use Equation
  \eqref{lusztig} (for $\mu = (1^r)$ and $\lambda = \nu$), along with
  the fact that, in type $A$, all the local systems
  $M_{\mathfrak{g},\chi}$ are trivial.
\end{proof}

\begin{remark}\label{r:p-d}
  By Poincar\'e duality, $\Hom_W(\chi, H^{2i}(\cB; \C)) \cong
  \Hom_W(\chi \otimes \sgn, H^{2\dim \cB-2i}(\cB;\C))$, thus we also
  have
\[
K_{\mathfrak{g}, \chi}(t) = \sum_{i \geq 0} t^i\dim
\Hom_W\!\big(\chi\otimes\sgn, \H^{2i}(\cB; \C)\big).
\]
\end{remark}

\begin{remark}\label{r:coinv-alg} Note that 
  $\H^{*}(\cB;\C)$ is canonically isomorphic as a $W$-equivariant
  graded algebra to the coinvariant algebra
  $\bC[\mathfrak{h}]/(\bC[\mathfrak{h}]^W_+)$, where
  $\bC[\mathfrak{h}]_+ \subset \bC[\mathfrak{h}]$ is the augmentation
  ideal and $\mathfrak{h}^*\subset \C[\mathfrak{h}]$ sits in degree 2.
\end{remark}

\subsection{The conjecture}
Since the summand
$\IC(\cO_{{\mathfrak{g},\chi}};M_{\mathfrak{g},\chi})$ of $M$ is
simple, the weak $\cs$-equivariant structure on $M$ induces a grading
on the multiplicity space $\chi$.  Let $h(\chi; t)$ be the Hilbert
series for this grading.

\begin{conjecture}\label{arbitrary type}
For each irreducible representation $\chi$ of $W$, we have  
\[
  h(\chi;y) = K_{\mathfrak{g}, \chi}(y^{-2}),
\]
and therefore $$P_{X}(x,y)\;\;\; = \sum_{\chi \in
  \Irrep(W)}
K_{\mathfrak{g},\chi}(x^2) K_{\mathfrak{g},\chi}(y^{-2}).$$
\end{conjecture}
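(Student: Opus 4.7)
The plan is to combine the established isomorphism $M \cong T = \rs$ from \cite{ES10} with the Springer decomposition \eqref{Springer}. Taking $H^{-k}\pi_*$ of that decomposition and applying Proposition \ref{typeA}, I would first show that the multiplicity of each irreducible $W$-representation $\chi$ in $\hp_*(X)$ has $x$-Poincar\'e polynomial equal to $K_{\mathfrak{g},\chi}(x^2)$. This accounts for the first factor in the conjectured product, provided one also checks that the $\cs$-weights coming from $\IH^*(\bar\cO_\chi; M_\chi)$ factor out cleanly---a purity-type statement for intersection cohomology of conical varieties, which must be verified separately (and which reduces, via the conical action, to the fact that the $\cs$-weight on $\IH^j$ is determined by the cohomological degree).

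The main content is then the claim $h(\chi; y) = K_{\mathfrak{g},\chi}(y^{-2})$, for which I would invoke Theorem \ref{weights} applied to each nilpotent orbit $S = \cO$. That theorem identifies the fiber $L_{\cO, s}$ with $\hpz(X_\cO)[n\dim\cO/2]$ as graded vector spaces, where $n=2$ and $X_\cO$ is the Slodowy slice at $s$. By the Springer decomposition, $L_{\cO, s} \cong \bigoplus_\chi (M_\chi)_s \otimes V_\chi$, summed over $\chi$ with $\cO_\chi = \cO$, and this decomposition is equivariant for the component group $A(\cO) = \pi_1(\cO)$ acting on $(M_\chi)_s$. Matching $A(\cO)$-isotypic components then expresses the weight Hilbert series $h(\chi; y)$ as a shifted isotypic piece of $\hpz(X_\cO)$. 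The task reduces to showing that this piece equals $K_{\mathfrak{g},\chi}(y^{-2})$, which via Poincar\'e duality (Remark \ref{r:p-d}) and the coinvariant-algebra identification $H^*(\cB; \C) \cong \C[\mathfrak{h}]/(\C[\mathfrak{h}]^W_+)$ (Remark \ref{r:coinv-alg}) becomes a Springer-theoretic statement about the graded structure of Slodowy-slice Poisson homology.

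The main obstacle is computing $\hpz(X_\cO)$ with its $\cs$-weight grading in arbitrary type. In type A the slices are again S3-varieties, and Proposition \ref{s3-kostka} handles this conditionally on the symplectic-duality conjecture of \cite{Pr12}. For rank $\leq 2$, the slices $X_\cO$ can be identified explicitly (as Du Val singularities, minimal nilpotent orbits, or small products thereof) and the conjecture can be verified case-by-case. A uniform proof would likely require either a Springer-type formula for $\hpz$ of Slodowy slices in general type---perhaps via the structure of finite W-algebras, whose center and Poisson homology has been studied---or an entirely different approach that bypasses the slice computation by matching $V_\chi$ to a graded $W$-multiplicity directly, for instance via a Koszul- or Hodge-theoretic comparison between the de Rham data of $M$ and the topological data of $\cB$ that makes the symmetry $x \leftrightarrow y^{-1}$ in the $\chi$-isotypic bigraded Poincar\'e polynomial manifest.
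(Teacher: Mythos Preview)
This statement is a \emph{conjecture} in the paper; there is no general proof, only verifications in special cases (along the subregular orbit in Propositions~\ref{ex:sr-sl} and~\ref{ex:sr-nsl}, in rank~$\leq 2$ in Propositions~\ref{ex:b2-c2} and~\ref{ex:g2}, and the conditional type~A agreement in Proposition~\ref{type A same}). Your outline correctly reduces the first formula, via Theorem~\ref{weights} and the Springer decomposition, to computing the graded $\hpz$ of each Slodowy slice together with its component-group isotypic decomposition, and you correctly identify this slice computation as the obstruction. That is exactly the route the paper takes in the cases it handles: the slice $\hpz$ is computed explicitly (Kleinian singularities for the subregular orbit, and a direct calculation for the eight-dimensional orbit in $G_2$), and the local-system pieces are then matched by inspection. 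So your proposal is an accurate strategy rather than a proof, and the paper does not claim more.

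One correction concerning the ``therefore'' clause: no separate purity argument is needed. By definition, $h(\chi;y)$ records the action of the Euler endomorphism on the multiplicity space $V_\chi$ in the decomposition $M \cong \bigoplus_\chi \IC(\cO_{\mathfrak{g},\chi}; M_{\mathfrak{g},\chi}) \otimes V_\chi$. Since each $\IC$ summand is simple, its $D$-module endomorphism ring is $\C$, so any $D$-module endomorphism of the $\chi$-isotypic piece is of the form $\Id \otimes A_\chi$; in particular the Euler endomorphism acts this way, and the entire weight grading is carried by $V_\chi$. The pushforward $\pi_*\IC(\cO_{\mathfrak{g},\chi}; M_{\mathfrak{g},\chi})$ therefore contributes only to the homological variable, and by Proposition~\ref{typeA} that contribution is $K_{\mathfrak{g},\chi}(x^2)$. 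Thus $P_X(x,y) = \sum_\chi K_{\mathfrak{g},\chi}(x^2)\, h(\chi;y)$ holds unconditionally, and the second formula of the conjecture follows immediately from the first.
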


\begin{remark}
  Conjecture \ref{arbitrary type} holds at the specialization $y=1$ by
the fact that $H^*(\cB; \C)$ is isomorphic to the regular representation of $W$.
\end{remark}

\begin{remark}\label{r:bottom-degree}
  If $M_{\mathfrak{g},\chi}$ is trivial, then
  $\IH^0(\bar\cO_{\mathfrak{g},\chi}; M_{\mathfrak{g},\chi}) \cong
  \C$, thus Proposition \ref{typeA} tells us that the top degree of
  $K_{\mathfrak{g},\chi}(x^2)$ is equal to the dimension of
  $\cO_{\mathfrak{g},\chi}$.  Similarly, the bottom degree of
  $K_{\mathfrak{g},\chi}(y^{-2})$ is equal to $-\dim
  \cO_{\mathfrak{g},\chi}$, which is what the bottom degree of
  $h(\chi)$ should be according to Theorem \ref{weights}.
\end{remark}

\begin{remark}
By Theorem \ref{weights}, Conjecture \ref{arbitrary type} implies
that, for each nilpotent orbit $S$,
\begin{equation}\label{e:hilb-sl}
  P_{X_S}(0,y)\;\; = \sum
  y^{\dim\cO_{\mathfrak{g},\chi}} K_{\mathfrak{g}, \chi}(y^{-2}),
\end{equation}
where the sum is taken over all $\chi$ such that
$\cO_{\mathfrak{g},\chi}=S$.  If there is only one such $\chi$ (and
hence $M_{\mathfrak{g},\chi}$ is trivial), then Conjecture
\ref{arbitrary type} for $\chi$ is equivalent to Equation
\eqref{e:hilb-sl} for $S$.
\end{remark}

\begin{example}\label{ex:triv,sign}
  In the case where $\chi = \operatorname{triv}$, which corresponds to
  the trivial local system on the open orbit, Equation
  \eqref{e:hilb-sl} says that $h(\operatorname{triv};y)=y^{-\dim X}$.
  On the other hand, if $\chi = \sgn$, which corresponds to the cone
  point, it says $h(\sgn;y) = 1$. These conclusions both agree with
  Theorem \ref{weights}, since in both cases the Poisson homology of
  the slice is one-dimensional and concentrated in degree zero.
\end{example}

\begin{proposition}\label{type A same}
  If $\mathfrak{g} = \mathfrak{sl}_r$, then the first formula of
  Conjecture \ref{arbitrary type} agrees with Equation \eqref{top}
and the second with Proposition \ref{s3-kostka}.
\end{proposition}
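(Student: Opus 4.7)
The plan is to reduce both agreements to a single classical identity for Kostka--Foulkes polynomials with column content,
\[
K_{\nu(1^r)}(t) \;=\; t^{n_{\nu^t}-n_\nu}\,K_{\nu^t(1^r)}(t)\qquad\text{for every }\nu\vdash r.
\]
To establish the identity, I would combine Proposition \ref{typeA} with Remarks \ref{r:p-d} and \ref{r:coinv-alg}, which together express $K_{\nu(1^r)}(t) = K_{\mathfrak{g},\chi_\nu}(t)$ as the graded multiplicity of $\chi_\nu\otimes\sgn=\chi_{\nu^t}$ in the coinvariant algebra of $S_r$. The $q$-analog of the hook-length formula evaluates this to $t^{n_{\nu^t}}[r]_t!/\prod_{x\in\nu}[h(x)]_t$; since the multiset of hook lengths is invariant under $\nu\leftrightarrow\nu^t$, the ratio $K_{\nu(1^r)}(t)/K_{\nu^t(1^r)}(t)$ simplifies to $t^{n_{\nu^t}-n_\nu}$.

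With the identity established, the second agreement is immediate. Specializing Proposition \ref{s3-kostka} to the nilpotent cone ($\la=(r)$, $\mu=(1^r)$) makes the prefactor $y^{2(n_{\la^t}-n_\mu)}$ equal to $1$, leaving
\[
P_X(x,y) \;=\; \sum_{\nu\vdash r}\,y^{2(n_\nu-n_{\nu^t})}\,K_{\nu(1^r)}(x^2)\,K_{\nu^t(1^r)}(y^{-2}).
\]
Substituting $t=y^{-2}$ into the Kostka identity turns the factor $y^{2(n_\nu-n_{\nu^t})}K_{\nu^t(1^r)}(y^{-2})$ into $K_{\nu(1^r)}(y^{-2})$, collapsing the sum to $\sum_\nu K_{\nu(1^r)}(x^2)\,K_{\nu(1^r)}(y^{-2})$. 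Under the Springer convention $\chi_\nu\leftrightarrow\cO_\nu$ of Example \ref{ex:triv,sign}, Proposition \ref{typeA} identifies this with $\sum_\chi K_{\mathfrak{g},\chi}(x^2)\,K_{\mathfrak{g},\chi}(y^{-2})$, the second formula of Conjecture \ref{arbitrary type}.

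For the first agreement I would compare the two predicted formulas for $P_{X_S}(0,y)$ at a leaf $S=\cO_\mu$ of the nilpotent cone. Since all type A Springer local systems are trivial, Equation \eqref{e:hilb-sl} combined with the first formula of Conjecture \ref{arbitrary type} gives $P_{X_S}(0,y) = y^{2(n_{(1^r)}-n_\mu)}\,K_{\mu(1^r)}(y^{-2})$, whereas Equation \eqref{top} applied to the S3-variety $X_{(r),\mu}$ gives $y^{2(n_{(1^r)}-n_{\mu^t})}\,K_{\mu^t(1^r)}(y^{-2})$. Substituting $t=y^{-2}$ and applying the Kostka identity equates the two. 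The only real obstacle in all of this is the bookkeeping with transposes and shifts: although the paper's Springer convention is $\chi_\nu\leftrightarrow\cO_\nu$, the polynomial $K_{\mathfrak{g},\chi_\nu}(t)=K_{\nu(1^r)}(t)$ is naturally the fake degree of the \emph{conjugate} representation $\chi_{\nu^t}$, so the transposes must be tracked consistently throughout.
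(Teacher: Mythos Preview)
Your proposal is correct and follows the same overall architecture as the paper: both reduce everything to the single identity
\[
K_{\nu(1^r)}(t)\;=\;t^{\,n_{\nu^t}-n_\nu}\,K_{\nu^t(1^r)}(t),
\]
which in the paper's notation is exactly Equation~\eqref{e:id-typea}. The difference is in how this identity is established. The paper applies Poincar\'e duality (Remark~\ref{r:p-d}) to rewrite \eqref{e:id-typea} as a palindromicity statement for $K_{\mathfrak{g},\chi}$, and then cites Steinberg and Beynon--Lusztig for that palindromicity. You instead invoke the $q$-hook-length formula for the fake degree of $\chi_{\nu^t}$ and cancel the hook product, which is symmetric under $\nu\leftrightarrow\nu^t$. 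Your route is more self-contained in type $A$; the paper's route has the advantage that the palindromicity formulation generalizes beyond type $A$ (see Remark~\ref{BL}). A second minor difference is that you verify the ``second agreement'' explicitly by specializing Proposition~\ref{s3-kostka} at $\lambda=(r)$, $\mu=(1^r)$ and applying the identity, whereas the paper treats only the first agreement in detail and leaves the second implicit.
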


\begin{proof}
If $\cO_{\mathfrak{g},\chi} = \cO_\nu$, then Equation \eqref{top} with $\la = (r)$ and $\mu = \nu$ tells us that 
$$P_{X_{(r)\nu}}(0,y) = y^{2(n_{(1^r)} - n_{\nu^t})}K_{\nu^t(1^r)}(y^{-2}) = y^{\dim \cO_{\nu^t}}K_{\nu^t(1^r)}(y^{-2})
= y^{\dim
  \cO_{\mathfrak{g},\chi\otimes\sgn}}K_{\mathfrak{g},\chi\otimes\sgn}(y^{-2}).$$
On the other hand, the first formula of Conjecture \ref{arbitrary
  type} is equivalent to Equation \eqref{e:hilb-sl}, which says that
$$P_{X_{(r)\nu}}(0,y) = y^{\dim
  \cO_{\mathfrak{g},\chi}}K_{\mathfrak{g},\chi}(y^{-2}).$$ 
Thus we
need to prove the following identity:
\begin{equation}\label{e:id-typea}
  K_{\mathfrak{g},\chi}(t^2) = t^{\dim \cO_{\mathfrak{g},\chi} - \dim \cO_{\mathfrak{g},\chi \otimes \sgn}} K_{\mathfrak{g},\chi \otimes \sgn}(t^2). 
\end{equation}
Using Poincar\'e duality (Remark \ref{r:p-d}) and the fact that $\dim \cO_{\mathfrak{g},\chi} = r(r-1) - n_{\nu}$, this identity reduces to the following palindromic property of $K_{\mathfrak{g},\chi}(t^2)$:
\[
K_{\mathfrak{g},\chi}(t^2) = t^{n_{\nu^t}-n_{\nu} + r(r-1)} K_{\mathfrak{g},\chi}(t^{-2}).
\]
This follows from \cite[Propositions A and B, (1)]{BL-snrceWg}, and (as
explained there) is originally due to Steinberg \cite{Ste-garflgGf}.
\excise{ For this, refer to Steinberg (ref.~(8) at the end of Section
  2 of \cite{BL-snrceWg}), and cf.~\cite[\S 2]{BL-snrceWg} (for the
  palindromic property without the exponent of $t$), as well as
  \cite[(1)]{BL-snrceWg} (for the exponent), which as observed there
  is essentially equivalent to a formula in the reference (6) therein.
}
\end{proof}

\vspace{-\baselineskip}
\begin{remark}\label{BL}
  As pointed out by G.~Lusztig, it is possible to generalize
  \eqref{e:id-typea} to arbitrary irreducible types.
%
  For any irreducible representation $\chi$ of $W$, let $\chi^s$
  denote the unique special representation in the same two-sided cell
  as $\chi$.
  In \cite{BL-snrceWg}, there is an involution $i$ defined on the set
  of irreducible representations of $W$, which is the identity except
  for six irreducible representations in types $E_7$ and $E_8$, called
  ``exceptional'' ones, which are exactly the representations for
  which $K_{\mathfrak{g},\chi}(t)$ is not palindromic.

  Lusztig pointed out that, combining \cite[Propositions A and
  B]{BL-snrceWg} with the determinant of
  \cite[5.12.2]{Lusztig-crgff-book}, and comparing powers of $u$ in
  the latter, one can conclude the following identity (when $W$ is
  irreducible):
\begin{equation}\label{e:BL}
  K_{\mathfrak{g},\chi}(t^2) = t^{\dim
    \cO_{\mathfrak{g},\chi^s} - \dim \cO_{\mathfrak{g},\chi^s \otimes
      \sgn}} K_{\mathfrak{g},i(\chi) \otimes \sgn}(t^2).
    \end{equation} 
    In type $A$, $i$ is trivial and $\chi=\chi^s$ for all $\chi$, thus
    we recover the identity in Equation \eqref{e:id-typea}.
\end{remark}

\begin{remark}\label{r:old-conj}
Motivated by in part by symplectic duality \cite{BLPWgco}, 
we originally guessed the following formula for $h(\chi)$:
\[
y^{-\dim \cO_{\mathfrak{g},\chi} + \dim \cO_{\!\mathfrak{g}^L\!,\chi \otimes \sgn}}
K_{\mathfrak{g},\chi \otimes \sgn}(y^{-2}).
\]
(Here $\mathfrak{g}^L$ is the Langlands dual of $\mathfrak{g}$, whose
Weyl group is canonically isomorphic to that of $\mathfrak{g}$.)  This
agrees with Conjecture \ref{arbitrary type} in all of the examples
considered in this paper: types $A_\ell$, $B_2$, $C_2$, and $G_2$, and
also for the subregular orbit in general (and, in the $B_2, C_2$, and
$G_2$ cases, the Langlands duality is required for it to hold).
However, as Lusztig pointed out, the formulas do not coincide in some
cases, such as when $\chi$ is the (non-exceptional) $50$-dimensional
irreducible representation of $E_8$ for which $M_{\mathfrak{g},\chi}$
is trivial; moreover, Remark \ref{r:bottom-degree} implies that our
original guess was incorrect in this case.
\end{remark}

\subsection{A proof of the conjecture along the subregular orbit}
In this subsection we verify Conjecture \ref{arbitrary type} when
$\cO_{\mathfrak{g},\chi}$ is equal to the subregular orbit $R$.
First suppose $\mathfrak{g}$ is simply laced; in this case, the only
such representation is the reflection representation
$\chi=\mathfrak{h}$.

\begin{proposition}\label{ex:sr-sl}
If $\mathfrak{g}$ is simply laced, then Conjecture \ref{arbitrary type} holds for $\chi = \mathfrak{h}$.
\end{proposition}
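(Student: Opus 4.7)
The plan is to combine Theorem \ref{weights} with the Brieskorn--Slodowy identification of the subregular slice with an ADE Kleinian singularity, and then compute the graded zeroth Poisson homology of that Kleinian singularity directly.

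Since $\mg$ is simply laced, the normal slice $X_R$ to $R$ is, by Brieskorn--Slodowy, isomorphic to the Kleinian singularity $\bC^2/\Gamma$ of the same ADE type as $\mg$, with the induced $\cs$-action giving the symplectic form weight $n=2$. The Springer component group at a subregular element is trivial in simply laced type, so $M_{\mg,\mh}$ is the trivial local system, and the multiplicity space of the summand $\IC(\bar R;M_{\mg,\mh})$ in $M$ is canonically identified with the fiber $L_{R,s}\cong\mh$.

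I would then apply Theorem \ref{weights} with $S=R$: since $n=2$ and $\dim R=2\dim\cB-2$, it reduces the equality we wish to prove to
\[
\text{Hilb}\bigl(\hpz(X_R);\,y\bigr) \;=\; y^{\,2\dim\cB-2}\cdot K_{\mg,\mh}(y^{-2}).
\]
By Remark \ref{r:coinv-alg}, $\H^*(\cB;\bC)$ is the coinvariant algebra of $W$ with $\mh^*$ placed in degree $2$. A classical result of Chevalley implies that the reflection representation $\mh$ appears exactly once in each degree $2e_j$, where $e_1,\dots,e_r$ are the exponents of $W$, so that $K_{\mg,\mh}(t)=\sum_{j=1}^{r} t^{\dim\cB-e_j}$. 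Substituting, the right-hand side of the display becomes $\sum_{j=1}^r y^{2(e_j-1)}$.

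It remains to verify that $\hpz(\bC^2/\Gamma)$ has Hilbert series $\sum_{j=1}^r y^{2(e_j-1)}$. This can be done directly from the standard presentation $\bC[X_R]=\bC[u,v,w]/(f_\Gamma)$ and the explicit Poisson brackets among $u,v,w$, or by invoking known Hochschild-homology computations for the skew-group algebra $\bC[x,y]\rtimes \Gamma$ together with the McKay correspondence matching summands of the skew-group algebra to conjugacy classes of $\Gamma$. This last step is the main obstacle: although the ungraded dimension $r$ is classical, pinpointing the precise graded pieces so that the degrees are exactly $2(e_j-1)$ requires either a type-by-type verification from the defining equations of the singularity, or a clean conceptual identification of the Poisson-homology grading with the exponents. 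Once this is in place, the proposition follows by comparing the two Hilbert series.
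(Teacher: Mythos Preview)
Your approach is correct and essentially identical to the paper's: reduce via Theorem \ref{weights} to the equality $\mathrm{Hilb}(\hpz(X_R);y)=y^{\dim R}K_{\mg,\mh}(y^{-2})$, compute $K_{\mg,\mh}$ in terms of the degrees $d_i$ of the fundamental invariants (equivalently the exponents $e_i=d_i-1$), and then match with the known graded $\hpz$ of the Kleinian singularity. The paper computes $K_{\mg,\mh}$ via the map $f\mapsto(x\mapsto\partial_x f)$ from $\bC[\mh]^W$ to $\Hom_W(\mh,\bC[\mh]/(\bC[\mh]^W_+))$, which is exactly the mechanism behind the ``classical result of Chevalley'' you invoke; and for the step you flag as the main obstacle, it simply cites the literature (Greuel and Alev--Lambre) for the Hilbert series of $\hpz(\bC^2/\Gamma)$, which is indeed $\sum_i y^{2(d_i-2)}=\sum_j y^{2(e_j-1)}$.
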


\begin{proof}
  Since there is only one irreducible representation associated to the
  subregular orbit, Conjecture \ref{arbitrary type} for $\mathfrak{h}$
  is equivalent to Equation \eqref{e:hilb-sl} for $R$, which says
$$P_{X_R}(0,y) = y^{\dim R}K_{\mathfrak{g},\mathfrak{h}}(y^{-2}).$$
By Remarks \ref{r:p-d} and \ref{r:coinv-alg}, since $\codim R = 2$, we have
\[
y^{\dim R}K_{\mathfrak{g},\mathfrak{h}}(y^{-2}) = 
y^{-2}h(\Hom_W(\mathfrak{h}, \bC[\mathfrak{h}]/(\bC[\mathfrak{h}]^W_+);y).
\]
Consider the map from $\Phi:\C[\mathfrak{h}]^W\to\Hom_W(\mathfrak{h},
\bC[\mathfrak{h}] / (\bC[\mathfrak{h}]^W_+))$ taking $f$ to $\Phi_f$,
which is defined by the formula $\Phi_f(x) := \partial_x(f)$ for all
$x \in \mathfrak{h}$.  The restriction of $\Phi$ to the linear span of
the fundamental invariants (the ring generators of
$\bC[\mathfrak{h}]^W$) is an isomorphism.  Since $\Phi$ lowers degree
by 2, this implies that
$$y^{\dim R}
K_{\mathfrak{g},\mathfrak{h}}(y^{-2}) = y^{-2} \sum_i y^{2d_i-2} =
\sum_i y^{2(d_i-2)},$$ where $\{2d_i\}$ are the degrees of the
fundamental invariants.\footnote{The factor of 2 is there because
  $\mh^*$ sits in degree 2.}  This indeed coincides with
$P_{X_R}(0,y)$, as desired \cite{Gre-GMZ, AL98}.
\end{proof}

In the non-simply laced case, let $\tilde D$ be the simply laced
Dynkin diagram folding to the type of $\mathfrak{g}$, let $\tilde W$
be the corresponding Weyl group, and let $\tilde{\mathfrak{h}}$ be its
reflection representation. As representations of $W$, we have
$\tilde{\mathfrak{h}} \cong \mathfrak{h} \oplus \tau$ for some
irreducible representation $\tau\not\cong\mathfrak{h}$ of $W$, and
$\tau$ and $\mathfrak{h}$ are the only two irreducible representations
lying over $R$.
The slice $X_R$ is a Kleinian singularity of type $\tilde D$, and
Theorem \ref{weights} tells us that $\hpz(X_R)$ is isomorphic as a
graded vector space to a fiber of the local system $L_R[-\dim R]$,
where
$$L_R = \left(M_{\mathfrak{g},\mathfrak{h}}\otimes\mh \right)\oplus\left(M_{\mathfrak{g},\mathfrak{\tau}}\otimes\mathfrak{\tau}\right).$$

\begin{proposition}\label{ex:sr-nsl}
  If $\mathfrak{g}$ is not simply laced, then Conjecture
  \ref{arbitrary type} holds for $\chi = \mathfrak{h}$ and for $\chi =
  \tau$.
\end{proposition}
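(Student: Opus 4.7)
The plan is to combine Theorem \ref{weights} with the extra component-group symmetry on the Kleinian singularity $X_R$ and with the derivation-map argument of Proposition \ref{ex:sr-sl}. First, since $X_R$ is a Kleinian singularity of type $\tilde D$, the formula $P_{X_R}(0,y) = \sum_i y^{2(\tilde d_i - 2)}$ from \cite{Gre-GMZ, AL98} applies, where $\tilde d_i$ are the degrees of $\tilde W$. Theorem \ref{weights}, combined with the decomposition $L_R = (M_{\mathfrak{g},\mathfrak{h}} \otimes \mathfrak{h}) \oplus (M_{\mathfrak{g},\tau} \otimes \tau)$ and the fact that each simple IC summand of $M$ is a simple weakly $\cs$-equivariant D-module (so its fiber $(M_{\mathfrak{g},\chi})_s$ sits in a single weight $w_\chi$), yields the identity
$$y^{-\dim R} \sum_i y^{2(\tilde d_i - 2)} \;=\; y^{w_\mathfrak{h}} \dim(M_{\mathfrak{g},\mathfrak{h}})_s\cdot h(\mathfrak{h};y) \;+\; y^{w_\tau} \dim(M_{\mathfrak{g},\tau})_s\cdot h(\tau;y).$$

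Next, the folding equips $X_R$ with an action of $A(s) = \mathrm{Aut}(\tilde D/D)$ commuting with the $\cs$-action, refining the grading on $\hpz(X_R)$ to an $A(s)\times\cs$-bigrading. Running the derivation-map argument of Proposition \ref{ex:sr-sl} $A(s)$-equivariantly identifies $\hpz(X_R)$ with the span of the fundamental $\tilde W$-invariants with $A(s)$ acting naturally; at the same time, Springer theory identifies each $(M_{\mathfrak{g},\chi})_s$ with a specific irreducible $A(s)$-representation, with $(M_{\mathfrak{g},\mathfrak{h}})_s$ trivial and $(M_{\mathfrak{g},\tau})_s$ a designated non-trivial character. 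Passing to $A(s)$-isotypic components in the identity above then separates $h(\mathfrak{h};y)$ and $h(\tau;y)$, expressing each in terms of the graded $A(s)$-characters of $\hpz(X_R)$.

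On the Kostka side, the restriction $\C[\tilde{\mathfrak{h}}]^{\tilde W} \to \C[\mathfrak{h}]$ induces an isomorphism $\C[\tilde{\mathfrak{h}}]^{\tilde W \rtimes A(s)} \xrightarrow{\sim} \C[\mathfrak{h}]^W$, so the $A(s)$-invariant fundamental invariants of $\tilde W$ are exactly the fundamental invariants of $W$ (with the same degrees), and the derivation argument of Proposition \ref{ex:sr-sl} applied to $(W,\mathfrak{h})$ computes $K_{\mathfrak{g},\mathfrak{h}}(y^{-2})$ in terms of these degrees. Matching this against the $A(s)$-trivial isotypic component of the preceding step yields $h(\mathfrak{h};y) = K_{\mathfrak{g},\mathfrak{h}}(y^{-2})$. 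An analogous derivation argument, applied to the non-invariant part of the restriction map and the graded $W$-multiplicity of $\tau$ in the coinvariant algebra $\C[\mathfrak{h}]/(\C[\mathfrak{h}]^W_+)$, gives the corresponding identity $h(\tau;y) = K_{\mathfrak{g},\tau}(y^{-2})$.

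The main obstacle is the final step for $\chi = \tau$: identifying the graded $W$-multiplicity of $\tau$ in the coinvariant algebra with the $A(s)$-non-invariant part of the $\tilde W$-coinvariant algebra, and verifying that the $A(s)$-character through which $(M_{\mathfrak{g},\tau})_s$ is detected matches the character through which $A(s)$ acts on the relevant fundamental $\tilde W$-invariants, with consistent weight shifts throughout. If a uniform proof is out of reach, one can fall back on a case-by-case verification in the four non-simply-laced types $B_n$, $C_n$, $F_4$, and $G_2$, where both sides reduce to short explicit computations via the known degrees of $\tilde W$ and $W$.
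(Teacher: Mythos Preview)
Your plan coincides with the paper's: both invoke Theorem~\ref{weights} to identify $L_{R,s}\cong\hpz(X_R)[\dim R]$ as graded vector spaces, separate the $\mathfrak{h}$- and $\tau$-pieces via the monodromy action of $\pi_1(R)\cong A(s)$, compare with the Kostka side through the derivation map $f\mapsto\Phi_f$ on the coinvariant algebra, and fall back on a type-by-type check for the fine matching. Two corrections to your write-up: the factors $y^{w_\chi}$ vanish in the paper's normalization (the grading on each simple summand lives entirely on the multiplicity space $\chi$, so $w_\chi=0$), and in type $G_2$ the fiber $(M_{\mathfrak{g},\tau})_s$ is the two-dimensional reflection representation of $S_3$, not a character---the paper accordingly defers $G_2$ to Proposition~\ref{ex:g2}. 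For the remaining types, your ``analogous derivation argument'' for $\tau$ is exactly what the paper proves: that restriction induces a graded isomorphism
\[
\Hom_{\tilde W}\!\bigl(\tilde{\mathfrak{h}},\, \bC[\tilde{\mathfrak{h}}]/(\bC[\tilde{\mathfrak{h}}]^{\tilde W}_+)\bigr)\;\cong\;\Hom_W\!\bigl(\tilde{\mathfrak{h}},\, \bC[\mathfrak{h}]/(\bC[\mathfrak{h}]^W_+)\bigr),
\]
which (since $\tilde{\mathfrak{h}}\cong\mathfrak{h}\oplus\tau$ as $W$-modules) identifies the $\tau$-multiplicities in the $W$-coinvariant algebra with the fundamental $\tilde W$-invariants restricting to zero on $\mathfrak{h}$; injectivity of this restriction map is then checked case-by-case in $B_n$, $C_n$, and $F_4$.
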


\begin{proof}
  \excise{ To prove the conjecture for $\chi=\mathfrak{h}$, first note
    that this is special. Next, $\cO_{\mathfrak{g}^L,\mathfrak{h}
      \otimes \sgn}$ is isomorphic to the quotient of the minimal
    nilpotent orbit in type $\tilde D$ by the group of automorphisms
    of the graph $\tilde D$ ($S_2$ when $\mathfrak{g}$ is of types
    $B_n, C_n$, or $F_4$, in which case $\tilde D$ is $A_{2n-1},
    D_{n+1}$, or $E_6$, respectively; and $S_3$ when it is of type
    $G_2$, in which case $\tilde D = D_4$). The dimension, as in
    Example \ref{ex:refl-sl}, is $2h-2$, where $h$ is the Coxeter
    number of $\tilde D$.  } For $\chi = \mh$ or $\tau$, let
  $\hpz(X_R)_\chi\subset \hpz(X_R)$ be the summand corresponding to a
  fiber of the local system $M_{\mathfrak{g},\mathfrak{h}}[-\dim
  R]\subset L_R[-\dim R]$.  As in the proof of Proposition
  \ref{ex:sr-sl}, we need to show that the Hilbert series of
  $\hpz(X_R)_\chi$ is equal to
\begin{equation}\label{e:chi}
y^{-2}
h(\Hom_W(\chi, \bC[\mathfrak{h}] /
(\bC[\mathfrak{h}]^W_+));y).
\end{equation}

We first consider the case where $\chi = \mh$.  The local system
$M_{\mg,\mh}$ is trivial, so $\hpz(X_R)_\mh$ is the part of
$\hpz(X_R)$ that is fixed by the action of $\pi_1(R)$.  As in the
proof of Proposition \ref{ex:sr-sl}, Equation \eqref{e:chi} simplifies
to $$\sum_i y^{2(d_i-2)},$$ where $\{2d_i\}$ are the degrees of the
fundamental invariants for the action of $W$ on $\bC[\mh]$.  (Note
that these are a subset of the fundamental invariants for the action
of $\tilde W$ on $\bC[\tilde\mh]$.)  We will check on a case-by-case
basis that this is equal to the Hilbert series of $\hpz(X_R)_\mh$.  We
will skip the case of $G_2$, since that will be treated separately in
Proposition \ref{ex:g2}.  In all other cases, $\pi_1(R)\cong \bZ/2$,
and the action on $\hpz(X_R)$ can be deduced from the explicit bases
for the latter in \cite[\S 5.1]{EGPRS}.  It is straightforward to
check that our formula is correct.

\excise{
The latter degrees are as
follows: for $\mathfrak{g}$ of type $B_n$, $d_i \in
\{3,5,\ldots,2n-1\}$; for type $C_n$, they are $d_i \in \{n\}$, for
$\mathfrak{g}$ of type $F_4$, $d_i \in \{5,9\}$; and for
$\mathfrak{g}$ of type $G_2$, $d_i = 4$ with multiplicity two.  We
will assume we are not in the case $G_2$, which is handled in Proposition
\ref{ex:g2} below.  Otherwise, $\pi_1(S) = \bZ/2$ and the monodromy
around the nontrivial loop is induced by the nontrivial symplectic
automorphism of $X_S$. One can check explicitly (using a basis for
$\hpz(X_S)$, which in the $B_n$ and $C_n$ cases are given in \cite[\S
5.1]{EGPRS}, where $X_S$ is Kleinian of types $A_{2n-1}$ and
$D_{n+1}$, respectively) that the degrees in which $\pi_1(S)$ acts
nontrivially are the ones listed above.
}

Next, consider the case $\chi = \tau$.  In view of the above, we need
to show that
$$y^{-2}h(\Hom_W(\tau, \bC[\mathfrak{h}] / (\bC[\mathfrak{h}]^W_+));y) = \sum_i y^{2(e_i - 2)},$$
where $\{2e_i\}$ are the degrees of the fundamental invariants for the
action of $\tilde W$ on $\bC[\tilde\mh]$ that restrict to zero on $\mh
\subseteq \tilde \mh$.  
To prove this, it is sufficient to show that
there exists a graded vector space isomorphism
\[
\Hom_{\tilde W}(\tilde{\mathfrak{h}},
\bC[\tilde{\mathfrak{h}}]/(\bC[\tilde{\mathfrak{h}}]^{\tilde W}_+)) \cong
\Hom_W(\tilde{\mathfrak{h}},
\bC[\mathfrak{h}]/(\bC[\mathfrak{h}]^W_+)).
\]
The restriction map from $\bC[\tilde{\mathfrak{h}}]$ to $\bC[\mathfrak{h}]$ induces a natural map from the left-hand side
to the right-hand side.  
Moreover, both sides have the
same dimension (equal to $\dim \tilde{\mathfrak{h}})$, since the
coinvariant algebras for $W$ and $\tilde W$ are the regular
representations of $W$ and $\tilde W$, respectively.  Therefore, it
suffices to prove that the natural map is injective.

Equivalently, we need to show that, for every fundamental invariant $f
\in \bC[\tilde {\mathfrak{h}}]^{\tilde W}$ which restricts to zero on
$\tilde \mh$, the corresponding homomorphism $\Phi_f \in \Hom_{\tilde
  W}(\tilde{\mathfrak{h}}, \bC[\tilde{\mathfrak{h}}] /
(\bC[\tilde{\mathfrak{h}}]^{\tilde W}_+))$ defined above restricts to
a nonzero element of $\Hom_{W}(\tau,
\bC[\mathfrak{h}]/(\bC[\mathfrak{h}]^W_+))$.  This is easy to verify
explicitly in the case where $\mathfrak{g}$ is of type $B_n$ (so
$\tilde D = A_{2n-1}$), using the embedding $W(B_n) \into W(A_{2n-1}) \cong
S_{2n-1}$, since then $\bC[\tilde{\mathfrak{h}}]^{\tilde W}$ is the
ring of symmetric polynomials (modulo linear symmetric polynomials).
In the case $C_n$, $\tau$ is one-dimensional and $\tau \otimes \tau$
is trivial, thus $\mh^\perp \subseteq (\tilde \mh)^*$ is
one-dimensional.  Then, the fundamental invariant $f$ of
$\bC[\tilde{\mathfrak{h}}]^{\tilde W}$ which restricts to zero in
$\bC[\mathfrak{h}]^W$ lies in $(\mh^\perp)$ but not in
$(\mh^\perp)^2$.  It follows that the corresponding element $\Phi_f$
indeed restricts to a nonzero element of $\Hom_W(\tau,
\bC[\mh]/(\bC[\mh]^W_+))$.  In the case $F_4$, one can explicitly
verify the statement.
\end{proof}

\subsection{Proof of the conjecture for semisimple Lie algebras of rank at most 2}
Conjecture \ref{arbitrary type} is easy to verify for $\mathfrak{g}$
of type $A_1$ and $A_2$ by checking Equation \eqref{top} in low
dimensions.\footnote{In fact, in these cases, the result also follows
  from Example \ref{ex:triv,sign} and Proposition \ref{ex:sr-sl}.}
In the two remaining
examples, we prove the conjecture for $\mathfrak{g}$ of type $B_2$ and
$G_2$, and therefore for all all $\mathfrak{g}$ of semisimple rank at
most 2.

\begin{proposition}\label{ex:b2-c2}
Conjecture \ref{arbitrary type} for $\mathfrak{g}$ of type $B_2$ ($\mg = \mathfrak{so}_5$).
\end{proposition}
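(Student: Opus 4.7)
The plan is to verify the conjecture separately for each of the five irreducible representations of $W = W(B_2)$. Under the Springer correspondence these distribute among the four nilpotent orbits of $\mathfrak{so}_5$ as follows: the regular orbit carries $\triv$, the zero orbit carries $\sgn$, the subregular orbit $R$ carries the reflection representation $\mathfrak{h}$ (with trivial local system) together with a one-dimensional representation $\tau$ (with the nontrivial $\bZ/2$ local system), and the remaining minimal orbit $M$ of dimension four carries a single one-dimensional representation $\psi$ with trivial local system.

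Four of the five cases are already settled: Example \ref{ex:triv,sign} handles $\chi = \triv$ and $\chi = \sgn$, and Proposition \ref{ex:sr-nsl} handles $\chi = \mathfrak{h}$ and $\chi = \tau$. For the remaining representation $\psi$, note that it is the unique irreducible associated with $M$, so by the observation preceding Example \ref{ex:triv,sign}, Conjecture \ref{arbitrary type} for $\psi$ is equivalent to Equation \eqref{e:hilb-sl} for $M$, which reads
\[
P_{X_M}(0, y) \;=\; y^{\dim M}\, K_{\mathfrak{g}, \psi}(y^{-2}).
\]

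I would establish this identity by computing both sides. For the right-hand side, Remark \ref{r:coinv-alg} together with Remark \ref{r:p-d} identifies $K_{\mathfrak{g},\psi}(t)$ with the graded multiplicity of $\psi\otimes\sgn$ in the coinvariant algebra $\bC[\mathfrak{h}]/(\bC[\mathfrak{h}]_+^W)$; since the fundamental degrees of $W(B_2)$ are $2$ and $4$, this multiplicity is pinned down by a direct character calculation and yields a monomial in $t$. For the left-hand side, $P_{X_M}(0,y)$ is the $\cs$-graded dimension of the zeroth Poisson homology of the four-dimensional normal Poisson slice $X_M$, which I would extract either from the tabulations of nilpotent-slice Poisson homology in small rank given in \cite{EGPRS}, or by working directly with generators and relations for $X_M$ in Slodowy-slice coordinates inside $\mathfrak{so}_5^*$ and computing the Jacobi-type quotient $\bC[X_M]/\{\bC[X_M],\bC[X_M]\}$ together with its induced $\cs$-grading.

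The main obstacle is this last Poisson-homology computation: unlike the subregular case $X_R$, which is a Kleinian singularity of standard Du Val type and was already exploited in Proposition \ref{ex:sr-nsl}, the slice $X_M$ to the minimal orbit is not a textbook object, so extracting the weight-graded Hilbert series of $\hpz(X_M)$ requires explicit manipulation of its defining ideal or an appeal to \cite{EGPRS}. Once both sides are in hand, matching them degree by degree completes the verification and establishes the proposition.
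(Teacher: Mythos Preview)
Your overall strategy is correct and close to the paper's, but you have misjudged where the work lies. The case you flag as the ``main obstacle'' --- the minimal orbit $M$ --- is in fact the easiest. Since $\psi$ is the \emph{only} irreducible representation lying over $M$ and $M_{\mathfrak{g},\psi}$ is trivial of rank one, the local system $K_M$ has rank one; by \cite{ES10} (see the footnote at the start of Section~\ref{sec:general type}) this forces $\dim\hpz(X_M)=1$. A one-dimensional $\hpz$ of a cone is generated by the image of $1$, so $P_{X_M}(0,y)=1$, and Theorem~\ref{weights} immediately gives $h(\psi;y)=y^{-\dim M}=y^{-4}$, matching the monomial $K_{\mathfrak{g},\psi}(y^{-2})$. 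No Slodowy-slice computation or appeal to \cite{EGPRS} is needed; the paper dispatches this case in a single sentence (grouping it with $\triv$ and $\sgn$ under ``slices with one-dimensional $\hpz$'').

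Conversely, the subregular case --- which you dismiss by citing Proposition~\ref{ex:sr-nsl} --- is where the paper actually does some work. Rather than invoking that proposition, the paper argues directly: the slice is a type $A$ Kleinian singularity with an explicit monomial basis $1, xy, (xy)^2$ for $\hpz$, and since the top-degree class is the square of the middle one, the nontrivial monodromy must act in the middle degree, pinning down which weights carry $\mathfrak{h}$ and which carry $\tau$. Your citation of Proposition~\ref{ex:sr-nsl} is a legitimate shortcut, but note that its proof for types $B$ and $C$ already goes through exactly this kind of explicit Kleinian analysis, so you are not avoiding the computation, only relocating it.
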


\begin{proof}
There are four nilpotent orbits: the zero
  orbit, the minimal orbit (of dimension four), the subregular orbit
  (of dimension six), and the open orbit (of dimension eight).  Call these $O_0, O_2, O_4,
  O_6$, and $O_8$, where $O_k$ has dimension $k$.  These orbits are
  all simply-connected except for $O_6$, which has fundamental group
  $\bZ/2\bZ$.  Let $\Omega_k$ denote the rank-one trivial local system
  on $O_k$, and let $L_6$ be the nontrivial rank-one local system
  (with regular singularities) on $O_6$.  The Weyl group is 
  isomorphic to the dihedral group of order eight, which has five
  irreducible representations: $\triv, \sgn, \tau, \tau \otimes \sgn$,
  and $\mathfrak{h}$.
The Springer correspondence for $\mg$ takes the following form
  \cite[\S 13.3]{Carter}.\\
  
\begin{center}
  \begin{tabular}{|c|c|} \hline $\chi$ & $(\cO_{\mathfrak{g},\chi},

    M_{\mathfrak{g},\chi})$ 
    \\ \hline $\triv$ & $(O_8, \Omega_8)$ 
    \\ \hline $\sgn$ & $(O_0, \Omega_0)$ 
    \\ \hline $\tau$ & $(O_6, L_6)$ 
    \\ \hline $\tau \otimes \sgn$ & $(O_4, \Omega_4)$ 
    \\ \hline $\mathfrak{h}$ & $(O_6, \Omega_6)$ 
    \\ \hline
  \end{tabular}\end{center}
  \vspace{\baselineskip}
The $W$-equivariant Poincar\'e
polynomial of the coinvariant algebra is equal to
\[
1 + \mathfrak{h} \cdot t^2 + (\tau + \tau \otimes \sgn) \cdot t^4 + \mathfrak{h} \cdot t^6 + \sgn \cdot t^8,
\]
therefore 
\[
K_{\mathfrak{g},\triv}(t^2)=t^8, \quad K_{\mathfrak{g},\sgn}(t^2)=1, \quad
K_{\mathfrak{g},\tau}(t^2) = t^4 = K_{\mathfrak{g},\tau \otimes \sgn}(t^2),
\quad K_{\mathfrak{g},\mathfrak{h}}(t^2) = t^2+t^6.
\]
Thus, Conjecture \ref{arbitrary type} says that
$$h(\triv;y) = y^{-8}, \quad 
h(\sgn;y) = 1,\quad
h(\tau;y) = y^{-4} =h(\tau\otimes\sgn;y),\quad
h(\mathfrak{h};y)=y^{-2}+y^{-6}.$$ 
All of the slices except the slice to $O_6$ have one-dimensional
$\hpz$, therefore the conjectural formulas for $\triv$, $\sgn$, and
$\tau\otimes\sgn$ follow from Theorem \ref{weights}.  Our table tells
us that $\IC(O_6; \Omega_6)$ appears in $M$ with multiplicity $2 =
\dim\mh$ and $\IC(O_6; \Omega_6)$ appears in $M$ with multiplicity $1
= \dim\sgn$.  The slice to $O_6$ is a Kleinian singularity of type
$A_2$, where a basis for $\hpz$ is given by the images of $1, xy,
(xy)^2 \in \bC[x,y]^{\bZ/3}$.  Since the generator in top degree can
be taken to be the square of the generator in middle degree, we see
that the nontrivial local system $L_6$ must be in middle degree and
the trivial one $\Omega_6$ must be in top and bottom degrees; this
allows us to conclude that the formulas for $h(\sgn;y)$ and $h(\mh;y)$
are correct.
\end{proof}

\vspace{-\baselineskip}
\begin{proposition}\label{ex:g2}
Conjecture \ref{arbitrary type} holds for $\mathfrak{g}$ of type $G_2$.
\end{proposition}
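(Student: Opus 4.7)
The strategy parallels that of Proposition \ref{ex:b2-c2}. The nilpotent cone of $\mg = G_2$ has five orbits, which I denote by their dimensions: $O_0, O_6, O_8, O_{10}, O_{12}$, where $O_{10}$ is the subregular orbit. All orbits are simply connected except $O_{10}$, whose component group is $S_3$. The Weyl group $W(G_2)$ is dihedral of order $12$ with six irreducible representations: $\triv$, $\sgn$, two non-trivial one-dimensional representations, the reflection representation $\mathfrak{h}$, and a second two-dimensional representation $\tau$.

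The plan is first to record the Springer correspondence for $G_2$ (see \cite{Carter}): $\triv$ and $\sgn$ are attached to $O_{12}$ and $O_0$ (with trivial local system), the two non-trivial one-dimensional representations are attached to $O_6$ and $O_8$ (with trivial local systems), and $\mathfrak{h}, \tau$ are attached to $O_{10}$ with, respectively, the trivial and the two-dimensional irreducible local systems on $O_{10}$ (the sign local system on $O_{10}$ being cuspidal). Next, I would compute the $W$-equivariant Poincar\'e polynomial of the coinvariant algebra $\bC[\mathfrak{h}]/(\bC[\mathfrak{h}]^W_+)$, whose fundamental degrees are $2$ and $6$, and read off from it the six generalized Kostka polynomials $K_{\mg,\chi}$; this yields the conjectural formulas $h(\chi;y) = K_{\mg,\chi}(y^{-2})$.

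I would then verify each prediction in turn. For $\chi = \triv$ and $\chi = \sgn$, the formula is Example \ref{ex:triv,sign}. For each of the two non-trivial one-dimensional representations, the associated local system $K_S$ has rank one, so $\hpz(X_S)$ is one-dimensional by Corollary \ref{c:m-dec}, and the formula follows from Theorem \ref{weights}. The substantive case concerns $\mathfrak{h}$ and $\tau$, both attached to $O_{10}$, whose formal slice is the Kleinian singularity of type $D_4$ on which $\pi_1(O_{10}) = S_3$ acts by triality (the outer automorphism of the $D_4$ diagram). Using an explicit basis for $\hpz$ of the $D_4$-type Kleinian singularity (e.g., that of \cite{AL98}) together with the known generator degrees, I would determine the $\cs \times S_3$-graded character of the four-dimensional space $\hpz(X_{O_{10}})$. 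The trivial $S_3$-isotypic part, combined with the weight shift of Theorem \ref{weights}, should match $K_{\mg,\mathfrak{h}}(y^{-2})$, and the two-dimensional $S_3$-isotypic part should match $K_{\mg,\tau}(y^{-2})$.

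I expect the main obstacle to be exactly this explicit computation of the $S_3$-action on $\hpz(X_{O_{10}})$ compatible with the $\cs$-grading: one needs to identify how triality permutes generators in each weight, since unlike in the $B_2$ case the nontrivial isotypic component is two-dimensional and involves a non-abelian representation of the component group. Once the graded character of $\hpz(X_{O_{10}})$ as an $S_3$-module is determined, comparison with the Kostka polynomials is a routine dimension count in each bidegree.
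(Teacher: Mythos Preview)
Your Springer correspondence for $G_2$ is incorrect, and this causes you to miss the genuinely hard step.  You assign both two-dimensional irreducibles of $W(G_2)$ to the subregular orbit $O_{10}$ and the two non-trivial one-dimensional irreducibles to $O_6$ and $O_8$.  In fact (see \cite[p.~427]{Carter}), the second two-dimensional representation is attached to $O_8$ with the trivial local system, while one of the non-trivial one-dimensional representations is attached to $O_{10}$ with the rank-two local system $L_{10}$ coming from the standard representation of $\pi_1(O_{10})\cong S_3$.  A quick sanity check already flags the error: under your assignment the local system $L_{O_{10}}$ would have rank $2\cdot 1 + 2\cdot 2 = 6$, yet $\hpz$ of the $D_4$ Kleinian singularity is four-dimensional.

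The consequence is that $\hpz(X_{O_8})$ is two-dimensional, not one-dimensional, so it cannot be disposed of by Theorem \ref{weights} alone.  The substantive part of the proof is to show that its Hilbert series is $1+t^4$, and this is where the work lies: the paper does it by an explicit computation on the Slodowy slice $e_\alpha + \ker(\operatorname{Ad} f_\alpha)$ for the short simple root $\alpha$, analyzing the Kazhdan-graded generators of $\bC[Y]$ and using that $\hpz(X_{O_8}) \cong \hpz(Y)/(\bC[\mg]^{\mg}_+)$.  This step is entirely absent from your plan.  Conversely, the $O_{10}$ case you single out as the main obstacle is in fact easy: once you know the correct correspondence, the Hilbert series of $\hpz(X_{O_{10}})$ is $1+2t^4+t^8$, and since $L_{10}$ has rank two it is forced to sit in weight $4$; no detailed triality computation is needed.
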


\begin{proof}
  There are five nilpotent orbits, call them $O_0, O_{6}, O_{8},
  O_{10}$, and $O_{12}$ (again $\dim O_k = k$), and these are all
  simply-connected except for the subregular orbit $O_{10}$, which has
  fundamental group $S_3$ \cite[p.~427]{Carter}. Let $\Omega_k$ denote
  the trivial local system on $O_k$, and on $O_{10}$, let $L_{10}$
  denote the local system corresponding to the reflection
  representation of the fundamental group $S_3$ (this is irreducible
  of rank two, with regular singularities).

Let $\tau$ be the irreducible one-dimensional representation of $W$
other than $\sgn$ (it is denoted by $\phi_{1,3}'$ in \cite[p.~412]{Carter}).
Then the Springer
correspondence for $\mg$ takes the following form \cite[p.~427]{Carter}.\\

 \begin{center}
 \begin{tabular}{|c|c|} 
   \hline $\chi$ & $(\cO_{\mathfrak{g},\chi},
   M_{\mathfrak{g},\chi})$ 
   \\ \hline $1$ & $(O_{12}, \Omega_{12})$ 
   \\ \hline $\sgn$ & $(O_0, \Omega_0)$ 
   \\ \hline $\tau$ & $(O_{10}, L_{10})$ 
   \\ \hline $\tau \otimes \sgn$ & $(O_6, \Omega_6)$ 
   \\ \hline $\mathfrak{h}$ & $(O_{10}, \Omega_{10})$ 
   \\ \hline $\mathfrak{h} \otimes \tau$ & $(O_8, \Omega_8)$ 
   \\ \hline
  \end{tabular}
   \end{center}
  \vspace{\baselineskip}

The $W$-equivariant Poincar\'e polynomial of the coinvariant algebra is equal to
\[
1 + \mathfrak{h} \cdot t^2 + (\mathfrak{h} \otimes \tau) \cdot t^4 + (\tau + \tau \otimes \sgn) \cdot t^6 + (\mathfrak{h} \otimes \tau) \cdot t^8
+ \mathfrak{h} \cdot t^{10} + \sgn \cdot t^{12}
\]
therefore
\[
K_{\mathfrak{g},1}(t^2)=t^{12}, \quad K_{\mathfrak{g},\sgn}(t^2)=1, \quad
K_{\mathfrak{g},\tau} = t^6 = K_{\mathfrak{g},\tau \otimes \sgn},
\]\[ K_{\mathfrak{g},\mathfrak{h}}(t^2) = t^2+t^{10}, \quad K_{\mathfrak{g},\mathfrak{h} \otimes \tau}(t^2)=t^4+t^8.
\]
Thus, Conjecture \ref{arbitrary type} says 
that
\[
h(\triv;y) = y^{-12}, \quad
h(\sgn;y) = 1,\quad
h(\tau;y) = y^{-6} = h(\tau\otimes \sgn;y),\]\[
h(\mh;y) = y^{-2}+y^{-10},\quad
h(\mh\otimes\tau;y) = y^{-4}+y^{-8}.
\]
The slices to $O_0$, $O_6$, and $O_{12}$ have one-dimensional $\hpz$,
therefore the conjectural formulas for $\triv$, $\sgn$, and $\tau
\otimes \sgn$ follow from Theorem \ref{weights}.  The slice to
$O_{10}$ is a Kleinian singularity of type $D_4$, thus
$\hpz(X_{O_{10}})$, has the Hilbert series $1+2t^4+t^8$.  Since
$L_{10}$ has rank two, it must occur in weight $4$; this proves our
conjecture for $\tau$ and $\mathfrak{h}$.

Finally, to prove our conjecture for $\mathfrak{h} \otimes \tau$, we
need to show that $h(\hpz(X_{O_8});t)=1+t^4$.  First note that the
dimension of $\hpz(X_{O_8})$ must be two, as a consequence of
\cite{ES10}.  Since $X_{O_8}$ is conical and singular, the function $1
\in \C[X_{O_8}]$ has nonzero image in $\hpz(X_{O_8})$, thus we only
need to show that there is a nonzero element of $\hpz(X_{O_8})$ in
degree four.  To do this we can use the explicit realization of
$X_{O_8}$ given in \cite{GG}: it is the intersection of the nilpotent
cone with the Slodowy slice $Y := \Phi(e + \ker(\operatorname{Ad}
f))$, with $\Phi: \mathfrak{g} \to \mathfrak{g}^*$ given by the
Killing form, with $e \in O_8$ and $(e,h,f)$ a corresponding
$\mathfrak{sl}_2$-triple.  Since there is only one nilpotent orbit of
dimension 8, it is easy to see that we can take $e$ to be the
generator $e_\alpha$ of the root space for the short simple root
$\alpha$, $f = f_\alpha$, and $h=h_\alpha$.  Moreover, as explained in
\cite{ES10}, it suffices to compute $\hpz(Y)$ itself, since this is a
free module over $\bC[\mathfrak{g}]^{\mathfrak g}$, with
$\hpz(X_{O_8}) \cong \hpz(Y) / (\bC[\mathfrak{g}]^{\mathfrak g}_+)$,
the latter being the augmentation ideal.  The latter can be computed
explicitly in the first few degrees: under the Kazhdan grading,
$\C[Y]$ is a polynomial algebra on generators in degrees $2,2,2,4$,
$5$, and $5$. The first three generators are the $\mathfrak{sl}_2$
triple mentioned above, and they act trivially on the generator in
degree $4$.  Thus in degree four, $\hpz(Y)$ has dimension two.
However, the generators of $\bC[\mathfrak{g}]^{\mathfrak{g}}$ are in
degrees four and twelve (these are the fundamental invariants, and the
Kazhdan grading restricts on $\bC[\mathfrak{g}]^{\mathfrak{g}}$ to the
the usual grading placing $\mathfrak{g}^*$ in degree two, and the
latter is well-known to assign the generators degrees four and
twelve).  Thus, in degree four, $\hpz(X_{O_8})$ has dimension one, as
desired.
\end{proof}

\bibliography{./symplectic}
\bibliographystyle{amsalpha}
\end{document}